% SAMPLE ARTICLE FOR COMBINATORIAL THEORY
%
% CT papers *must* follow this sample file.
% Please, do not erase comments in the head of this sample file as
% they are used to process your file semi-automatically.

% COPYRIGHT NOTICE
% We use the CC BY licence. It allows others to freely distribute 
% your work while giving credit to you. In particular CC BY allows 
% remixing, tweaking and building upon your work.  
% See https://creativecommons.org/licenses/ for a full explanation
% of the Creative Commons licenses.

% DOCUMENT CLASS AND CT.STY
% All CT papers start with these two lines. Do not change make
% any changes in the ct.sty file as we use our own copy.
\documentclass[12pt]{article}
\usepackage{ct}
\usepackage{tikz}

% GEOMETRY AND SPACES
% To keep the style homogeneous among all CT papers, please
%   * remove all commands that change parameters such as margins or page sizes. 
%   * avoid using any vspace, hspace, newpage, and other spacing commands.

% PACKAGES
% The following packages are already loaded in the style file:
%   * amssymb, amsthm, amsmath for mathematics,
%   * graphicx for images,
%   * hyperref and cleveref for references.
% Load your own packages below only if they are really needed:
% \usepackage{...}

% THEOREM ENVIRONMENTS
% The following environments are already declared in the style file:
%   * theorem-like environments: theorem, lemma, corollary, proposition, fact, observation, claim
%   * definition-like environments: definition, example, conjecture, open, problem, question
%   * remark-like environments: remark, note
% Declare your own environments below only if needed:
% \newtheorem{myenv}[theorem]{MyEnvironment}

% SPECIFICATIONS
% The editors will insert the specifications of your paper.
\specs{n}{vol (issue)}{year}{}

% DATES
% Give the submission and acceptance dates in the format shown.
% The editors will insert the publication date in the third argument.
\dateline{TBA}{TBA}{TBD}

% KEYWORDS
% Give one or more keywords separated by commas. No dot at the end.
\keywords{extremal set theory, generalizations of Erd\H{o}s-Ko-Rado, cross-union families, cross-intersecting families}

% SUBJECT CODES
% Give one or more subject codes separated by commas. No dot at the end.
% Codes are available from http://www.ams.org/mathscinet/freeTools.html
\MSC{05D05}

% TITLE
% If needed, include a line break (\\) at an appropriate place in the title.
\title{A proof of Frankl's conjecture on cross-union families}

% AUTHORS AND AFFILIATIONS
% Input author, affiliation, address and support information as follows.
% The address should include the country, but does not have to include the street address. 
% Give at least one email address.
% Grants and institutional support information should be included in the thanks command. Other
% acknowledgements to colleagues or referees should appear in the Acknowledgements sections
% at the end of the paper, before the bibliography.

\author[1]{
Stijn Cambie\thanks{Supported by IBS-R029-C4 and by the UK Research and Innovation Future Leaders Fellowship MR/S016325/1.}}
\author[2]{
Jaehoon Kim\thanks{Supported by the POSCO Science Fellowship of POSCO TJ Park Foundation and the National Research Foundation of Korea (NRF) grant funded by the Korea government(MSIT) No. RS-2023-00210430.}}

	\author[3]{Hong Liu\thanks{Supported by IBS-R029-C4 and by the UK Research and Innovation Future Leaders Fellowship MR/S016325/1.}}
	\author[4]{Tuan Tran\thanks{
		Supported by the Institute for Basic Science (IBS-R029-Y1), and the Outstanding Young Talents Program (Overseas) of the National Natural Science Foundation of China.}}

% Write your affiation
\affil[2]{%
% Name of Department, Name of Institution, City and / or State, Country
Department of Mathematical Sciences, KAIST, South Korea.

% leave a blank line and put your email here 
\email{jaehoon.kim@kaist.ac.kr}%
}

\affil[4]{School of Mathematical Sciences, University of Science and Technology of China, China.
		\email{trantuan@ustc.edu.cn}}

% In case some authors share the same affiliation, use the following style.
% Do not use 'merging' of emails from the same institution such as \email{\{ssa,sta\}@uwn.edu.au}%
\affil[1,3]{%
Extremal Combinatorics and Probability Group (ECOPRO), Institute for Basic Science (IBS), Daejeon, South Korea
\email{stijn.cambie@hotmail.com}, \email{hongliu@ibs.re.kr}%
}

%%%%%%%%%%%%%%%%%%%%%%%%%%%%%%%%%%%%%%%%%%%%%%%%%%%
%%%%%%%%%%%%%%%%%%%%%%%%%%%%%%%%%%%%%%%%%%%%%%%%%%%

% AUTHORS COMMANDS
% Enter here all commands you need (and no more) in this paper.
% \newcommand{...}{...}

\def\G{\mathcal{G}}
\def\mH{\mathcal{H}}
\def\F{\mathcal{F}}

\newenvironment{poc}{\begin{proof}[Proof of claim]}{\end{proof}}

%%%%%%%%%%%%%%%%%%%%%%%%%%%%%%%%%%%%%%%%%%%%%%%%%%%
%%%%%%%%%%%%%%%%%%%%%%%%%%%%%%%%%%%%%%%%%%%%%%%%%%%

\begin{document}
%\writedatatofile

\maketitle

% ABSTRACT
% CT papers must include an abstract. The abstract should consist of a
% succinct statement of background followed by a listing of the
% principal new results that are to be found in the paper. The abstract
% should be informative, clear, and as complete as possible. Phrases
% like "we investigate..." or "we study..." should be kept to a minimum
% in favor of "we prove that..."  or "we show that...".  Do not
% include equation numbers, unexpanded citations (such as "[23]"), or
% any other references to things in the paper that are not defined in
% the abstract. The abstract may be distributed without the rest of the
% paper so it must be entirely self-contained.  Try to include all words
% and phrases that someone might search for when looking for your paper.
% You can use some basic LaTeX commands in the abstract, but not any
% user defined macros. 

\begin{abstract}
The families $\F_0,\ldots,\F_s$ of $k$-element subsets of $[n]:=\{1,2,\ldots,n\}$ are called {\em cross-union} if there is no choice of $F_0\in \F_0, \ldots, F_s\in \F_s$ such that $F_0\cup\ldots\cup F_s=[n]$. A natural generalization of the celebrated Erd\H{o}s--Ko--Rado theorem, due to Frankl and Tokushige, states that for $n\le (s+1)k$ the geometric mean of $\lvert\F_i\rvert$ is at most $\binom{n-1}{k}$. Frankl conjectured that the same should hold for the arithmetic mean under some mild conditions. We prove Frankl's conjecture in a strong form by showing that the unique (up to isomorphism) maximizer for the arithmetic mean of cross-union families is the natural one $\F_0=\ldots=\F_s={[n-1]\choose k}$.
\end{abstract}

% TABLE OF CONTENTS, LIST OF FIGURES, LIST OF TABLES
% Please, do not include a table of contents, a list of figures, or a
% list of tables. They will be removed by the editors (and the command
% is actually redefined in the ct.sty file).

%%%%%%%%%%%%%%%%%%%%%%%%%%%%%%%%%%%%%%%%%%%%%%%%%%%
%%%%%%%%%%%%%%%%%%%%%%%%%%%%%%%%%%%%%%%%%%%%%%%%%%%

\section{Introduction}

	The most natural operations on sets are intersections and unions. These two seemingly simple operations surprisingly give rise to exciting theories on collections of sets. The most famous such instances in extremal set theory are the theory on intersecting families and the theory of hypergraph matchings. The Erd\H{o}s-Ko-Rado theorem~\cite{EKR61} is arguably the most foundational result in the former regime, while the Erd\H{o}s matching conjecture~\cite{Erdos65} is the most central theme in the latter. In this paper, we consider a problem of Frankl which has deep connections to both of these intricate theories.
	
     Let us start by recalling the cornerstone Erd\H{o}s-Ko-Rado theorem. We say a family $\F$ of sets is \emph{intersecting} if $A\cap B\neq \varnothing$ for any $A,B\in \F$. 
	\begin{theorem}[\cite{EKR61}]\label{thr:EKR61}
		Let $n$ and $k$ be two positive integers with $n \ge 2k$. If $\F \subset \binom{[n]}{k}$ is an intersecting family, then $\lvert \F \rvert \le \binom{n-1}{k-1}.$ 
		This bound is sharp as equality holds if $\mathcal{F} = \{A \in {[n] \choose k} \colon 1 \in A\}$.
	\end{theorem}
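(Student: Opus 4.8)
The plan is to give Katona's cyclic‑permutation (circle) proof, which reduces the global counting problem to a clean local statement about arcs on a circle. Throughout, fix a cyclic ordering $\pi$ of $[n]$ and call a $k$-subset an \emph{arc} of $\pi$ if its elements are consecutive along the circle; each $\pi$ has exactly $n$ arcs. \textbf{Step 1 (local bound).} I would first show that, for any fixed $\pi$, an intersecting family of arcs of $\pi$ contains at most $k$ arcs. If such a family is nonempty, pick an arc $A=[a_1,a_2,\dots,a_k]$ in it (elements listed in cyclic order). The arcs meeting $A$ are precisely $A$ itself, the $k-1$ arcs beginning at $a_2,\dots,a_k$, and the $k-1$ arcs ending at $a_1,\dots,a_{k-1}$ — a total of $2k-1$. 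Pair the arc beginning at $a_{j+1}$ with the arc ending at $a_j$ for $j=1,\dots,k-1$: each such pair occupies $2k$ consecutive positions on the circle, so since $n\ge 2k$ the two arcs in a pair are disjoint, hence at most one of them lies in an intersecting family. Therefore the family has at most $1+(k-1)=k$ arcs.

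\textbf{Step 2 (double counting).} Count pairs $(\pi,A)$ with $\pi$ a cyclic ordering of $[n]$ and $A\in\F$ an arc of $\pi$. A fixed $k$-set is an arc of exactly $k!\,(n-k)!$ of the $(n-1)!$ cyclic orderings (view the $k$-set as a single block, giving $n-k+1$ objects arranged cyclically together with $k!$ internal orders of the block), so the number of pairs equals $\lvert\F\rvert\cdot k!\,(n-k)!$. On the other hand, by Step 1 each of the $(n-1)!$ orderings contributes at most $k$ pairs. Comparing the two expressions yields $\lvert\F\rvert\le (n-1)!\,k/(k!\,(n-k)!)=\binom{n-1}{k-1}$. \textbf{Step 3 (sharpness).} The star $\{A\in\binom{[n]}{k}:1\in A\}$ is clearly intersecting and has size exactly $\binom{n-1}{k-1}$, so the bound is attained.

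The only genuine obstacle is Step 1: one has to set up the pairing of the $2k-1$ arcs meeting $A$ into $A$ together with $k-1$ \emph{disjoint} pairs and check each pair is indeed disjoint, and it is precisely here that the hypothesis $n\ge 2k$ is used (for $n<2k$ every two $k$-sets meet, so the statement is false). The count of orderings in which a given $k$-set forms an arc and the final arithmetic in Step 2 are routine. An alternative I would keep in reserve is the shifting/compression approach — repeatedly applying $(i,j)$-shifts to reduce to a down-compressed family and then arguing directly, or deriving the bound from the Kruskal–Katona theorem — but Katona's circle method is the shortest self-contained route.
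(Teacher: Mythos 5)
Your proof is correct and complete: the local bound that an intersecting family of arcs on a cycle of length $n\ge 2k$ has at most $k$ members, the pairing argument that justifies it, and the double count over cyclic orderings are all carried out accurately. The paper itself quotes the Erd\H{o}s--Ko--Rado theorem from \cite{EKR61} without proof, so there is no in-paper argument to compare against; note, however, that the technique you use is exactly the one the paper adapts elsewhere, namely the (probabilistic) Katona circle method of Lemma~\ref{lem:Katonascircle_prob}, so your route is fully in the spirit of the paper's toolkit.
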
	
This concept of intersecting family further generalizes to \emph{$(s+1)$-cross-intersecting} families, which is a collection $\F_0,\dots, \F_s$ of families of sets where $\bigcap_{0\leq i\leq s} A_i\neq \emptyset$ for any choice of $A_i\in \F_i$ for all $0\leq i\leq s$.
    There have been numerous interesting generalizations of the Erd\H{o}s-Ko-Rado theorem to cross-intersecting families.
    The maximum value of $\prod_{0\leq i\leq s} |\F_i|$ was considered in e.g.~\cite{Bey05, Pyber86, MT89a, MT89b, FT11, FLST14, Borg15, Borg16, Borg17} and the maximum value of $\sum_{0\leq i\leq s}|\F_i|$ was considered in \cite{HiltonMilner67, Hilton77, Borg14, BF20, WZ13, WZ11}.
	
	In the case of $(s+1)$-cross intersecting families, if $n< (s+1)k/s$, then trivially $\F_0=\dots =\F_s = \binom{[n]}{k}$ provides the maximum possible collection.	
	For $n\ge (s+1)k/s$, the most natural example for the maximum product is the collection with $s+1$ copies of $ \{ A\in \binom{[n]}{k}: 1\in A  \}$, and this indeed is extremal as shown in \cite{FT11}.
	The sum version is more delicate, as certain relations of $s,k$ and $n$ might yield a different maximum as in the case of \cite{HiltonMilner67, BF20}. For a simple example, when $s=1$ and $n> 2k\geq 4$, a very asymmetric collection $\F_0=\{[k]\}$ and $\F_1=\{A \in \binom{[n]}{k}: A\cap [k]\neq \varnothing\}$ provides a maximum sum when the families are required to be non-empty.
	To better illustrate the relations among $s,k$ and $n$, it is much more convenient to consider the complements of the sets rather than the sets itself.
	
	By considering complements of the sets in an $(s+1)$-cross-intersecting family, we obtain the following notion. 
	\begin{definition}
A collection $\F_0,\dots, \F_s$ of families of nonempty sets in $\binom{[n]}{k}$ is \emph{$(s+1)$-cross-union} (or simply \emph{cross-union}) if $\bigcup_{0\leq i\leq s} A_i\neq [n]$ for any choice of $A_i\in \F_i$ for all $0\le i \le s.$
	\end{definition}	

Here, we only consider the case where the families are nonempty.
With this definition, we are interested in values of $(n,k,s)$ which ensure that $\F_0=\dots =\F_s = \binom{[n-1]}{k}$ is a %(the unique up to isomorphism) 
cross-union collection maximizing the sum $\sum_{0\leq i\leq s} |\F_i|$. 
%\textcolor{red}{Such triples are called nice.}
Indeed, Frankl proposed the following conjecture in \cite{Frankl21}.

\begin{conjecture}[Frankl, \cite{Frankl21}]\label{conj:Frankl_crossunion}
	Let $k \ge 2$ and $1 \le \ell \le k.$
	There exists $s_0=s_0(\ell)\ge 2$ such that for each $s \geq s_0$, if $n = sk + \ell$ and $\mathcal{F}_0, \mathcal{F}_1, \dots, \mathcal{F}_s$ are non-empty cross-union subfamilies of ${[n] \choose k}$, then
	$$
	\frac{\lvert \mathcal F_0 \rvert+ \lvert \mathcal F_1 \rvert + \ldots +  \lvert \mathcal F_s \rvert}{s+1} 
	\le  \binom{n-1}{k}.$$
\end{conjecture}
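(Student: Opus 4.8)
The plan is to combine a compression reduction with a cyclic counting argument in the spirit of Katona's cyclic method, and then to extract the extra savings needed when $\ell<k$ from the hypothesis that $s$ is large. \textbf{Step 1: reduction to shifted families.} Applying the simultaneous $(i,j)$-shift to all $s+1$ families at once leaves every $|\mathcal{F}_i|$, hence the sum, unchanged, and preserves the cross-union property (the complementary form of the standard fact that shifting preserves cross-intersection). Iterating, I may assume every $\mathcal{F}_i$ is shifted; in particular each nonempty $\mathcal{F}_i$ contains $[k]$. Since $\binom{[n-1]}{k}$ is shift-stable, the equality $\sum_i|\mathcal{F}_i|=(s+1)\binom{n-1}{k}$ is preserved by shifts, so for the uniqueness part it suffices to identify the extremal configurations among shifted families and then unwind the shifts.

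\textbf{Step 2: the cyclic bound.} Put $[n]$ on a cycle and, for a fixed cyclic order, let $a_i$ be the number of the $n$ cyclic arcs of length $k$ that lie in $\mathcal{F}_i$. Averaging over cyclic orders, $\sum_i|\mathcal{F}_i|$ is $\frac{\binom{n}{k}}{n}$ times the average value of $\sum_i a_i$, so it suffices to bound $\sum_i a_i$ for each fixed cyclic order. Because $n\le(s+1)k$, the $s+1$ arcs starting at positions $1,k+1,\dots,sk+1$ cover the whole cycle; their $n$ rotations are pairwise distinct arc-covers, and no such cover can have all of its $s+1$ arcs lying in the corresponding families, since otherwise their union would be $[n]$, contradicting cross-union. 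As each arc missing from a fixed $\mathcal{F}_i$ spoils exactly one rotation, we get $\sum_i(n-a_i)\ge n$, i.e.\ $\sum_i|\mathcal{F}_i|\le s\binom{n}{k}$. When $\ell=k$ one has $\binom{n}{k}=\frac{s+1}{s}\binom{n-1}{k}$, so this is already the desired bound; analysing when $\sum_i(n-a_i)=n$ for (almost) every cyclic order, together with shiftedness, then pins down $\mathcal{F}_0=\dots=\mathcal{F}_s=\binom{[n-1]}{k}$.

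\textbf{Step 3 (the main obstacle): the case $\ell<k$.} Here $s\binom{n}{k}$ strictly exceeds $(s+1)\binom{n-1}{k}$, the gap being controlled exactly by the overlap $(s+1)k-n=k-\ell$ forced in every arc-cover. The role of $s\ge s_0(\ell)$ is to convert this slack into a genuine gain: starting from a cyclic order whose witnessing rotation is spoiled by only one missing arc, the $k-\ell$ units of slack let one perturb that cover so as to avoid the offending arc while still covering the cycle, forcing a second missing arc, and iterating; bookkeeping these moves — which requires enough families relative to the number of defects, and this is where $s_0(\ell)$ enters — should upgrade the estimate to $\sum_i(n-a_i)\ge(s+1)k$, hence $\sum_i|\mathcal{F}_i|\le(s+1)\binom{n-1}{k}$, with equality tracked through Steps 1--3 to give uniqueness. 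I expect this to be the crux: the elementary cyclic pigeonhole of Step 2 loses precisely the quantity $k-\ell$ one must recover, so some genuinely extra input is needed — a cleverer family of covers, the augmentation argument just sketched, or a detour through a sharp sum estimate for $(s+1)$-wise cross-intersecting families — and it is exactly there that ``$s$ large'' is unavoidable. (The Frankl--Tokushige product bound alone does not suffice: maximising $\sum_i|\mathcal{F}_i|$ subject only to $\prod_i|\mathcal{F}_i|\le\binom{n-1}{k}^{s+1}$ and $|\mathcal{F}_i|\le\binom{n}{k}$ already allows the sum to exceed $(s+1)\binom{n-1}{k}$.)
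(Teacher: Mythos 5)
Your Steps 1 and 2 are sound: shifting preserves everything, and the cyclic cover argument is exactly the first part of the paper's probabilistic Katona lemma, giving $\sum_i\lvert\mathcal F_i\rvert\le s\binom{n}{k}$, which settles the case $\ell=k$. But Step 3 is the entire content of the theorem for $\ell<k$, and there you only gesture at an argument. The target you set yourself — upgrading $\sum_i(n-a_i)\ge n$ to $\sum_i(n-a_i)\ge(s+1)k$ for each (or on average over) cyclic orders — is not established, and the "perturb the cover to force a second missing arc and iterate" sketch gives no mechanism by which the hypothesis $s\ge s_0(\ell)$ enters quantitatively. It must enter somewhere concrete: Example~\ref{examp:example} beats $(s+1)\binom{n-1}{k}$ when $s\lesssim \ell/\ln\ell$, so any per-order augmentation of the kind you describe is simply false for small $s$, and your bookkeeping would have to detect exactly where it breaks. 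As you yourself note, "some genuinely extra input is needed" — which means the proposal, as written, does not prove the statement.

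For comparison, the paper recovers the missing $\tfrac{k-\ell}{n}$ not by refining the cover count per cyclic order but by changing one of the families. After compressing to nested families $\mathcal G_0\subset\cdots\subset\mathcal G_s$, it replaces the smallest family $\mathcal G_0$ by its $\ell$-shadow $\sigma_\ell(\mathcal G_0)$; the collection $\sigma_\ell(\mathcal G_0),\mathcal G_1,\ldots,\mathcal G_s$ is still cross-union and now has uniformities summing to exactly $n$, so the weighted circle bound gives $\frac{\lvert\sigma_\ell(\mathcal G_0)\rvert}{\binom{n}{\ell}}+\sum_{i\ge1}\frac{\lvert\mathcal G_i\rvert}{\binom{n}{k}}\le s$. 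Writing $\lvert\mathcal G_0\rvert=\binom{x_0}{k}$, the Lov\'asz form of Kruskal--Katona gives $\lvert\sigma_\ell(\mathcal G_0)\rvert\ge\binom{x_0}{\ell}$, and a calculus lemma shows $\frac{\binom{x_0}{\ell}}{\binom{n}{\ell}}\ge\frac{\binom{x_0}{k}}{\binom{n}{k}}+\frac{k-\ell}{n}$ \emph{provided} $\lvert\mathcal G_0\rvert$ is not too small; that lower bound on $\lvert\mathcal G_0\rvert$ is where $s\ge4\ell$ is used (via a second application of the circle method to the families restricted to the complement of a set of $\mathcal G_0$). If you want to salvage your outline, this shadow step is the "genuinely extra input" you are looking for; without it, or an equally concrete substitute, the proof is incomplete.
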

Here, the assumption that $n\leq (s+1)k$ is necessary as otherwise the union of $(s+1)$ sets of size $k$ is never equal to $[n]$.
%which has size bigger than $(s+1)k$.
On the other hand, the assumption $n>sk$ is also very natural. Indeed, note that 
$\F_0,\dots, \F_{s+1}$ being cross-union implies that $\F_0,\dots, \F_{s}$ is also cross-union.
Hence, assuming that $\F_{s+1}$ is the smallest among the families $\F_0,\dots, \F_{s+1}$, we obtain 
\[ \frac{\lvert \mathcal F_0 \rvert+ \lvert \mathcal F_1 \rvert + \ldots +  \lvert \mathcal F_{s+1} \rvert}{s+2} \leq 
	\frac{\lvert \mathcal F_0 \rvert+ \lvert \mathcal F_2 \rvert + \ldots +  \lvert \mathcal F_{s} \rvert}{s+1}.\]
Therefore, proving the above conjecture for $s=\lfloor \frac{n}{k} \rfloor$ yields the results for all larger values of $s$ and hence the condition $sk< n \leq (s+1)k$ in Conjecture~\ref{conj:Frankl_crossunion} is sensible.
%\textcolor{red}{Here we remark that there do exist examples with $s$ arbitrary large and $k<n<2k$ for which the arithmetic mean of the sizes $\lvert \F_i \rvert$ can be larger than $\binom{n-1}{k}$, so from $s\gg \left \lfloor \frac{n}{k} \right \rfloor$ one cannot conclude anything in general.} 
%\textcolor{blue}{T: I am confused. I thought $k<n<2k$ implies $s=1$ since $n=ks+\ell$.}

We further remark that the condition $s\ge s_0(\ell)$ in Conjecture~\ref{conj:Frankl_crossunion} is also necessary.
Indeed, for small values of $s$, the conclusion of Conjecture~\ref{conj:Frankl_crossunion} does not always hold. 	For example, Hilton and Milner~\cite{HiltonMilner67} proved that for $s=1$, the maximum of $\frac{1}{s+1}\sum_{0\leq i\leq s} |\F_i|$ is not  $\binom{n-1}{k}$. Moreover, the following example shows that the value $s_0$ must depend on $\ell$. 
\begin{example}\label{examp:example}
For $s\geq 2, \ell\geq 1,c\ge 1,k=\ell+c$ and $n=sk+\ell$,
the families $\F_0= \{[k]\}$, $\F_1 =\{A\in \binom{[n]}{k}: |A\cap [k]|\geq c+1\}$ and $\F_2=\dots= \F_s= \binom{[n]}{k}$ are cross-union.
\end{example}\noindent
In fact, this example shows that for fixed $c$ the condition $s_0 = \Omega\left(\frac{\ell}{\ln \ell}\right)$ is necessary. %For fixed $c$ and $s$, 
We know that $\binom{k}{\leq c} \leq (c+1) k^c$ and $\binom{n-1}{k}=\frac{n-k}{n}\binom{n}{k}$.
If $k\ge 3$ and $s< \frac{k}{ (c+2) \ln{k}}-1$,
		then $\frac{ \binom{n-k}{k} }{ \binom{n}{k} } \le \left( \frac{n-k}{n} \right)^k \le \left( 1-\frac{1}{s+1} \right)^k = \left( \frac{s}{s+1} \right)^k< e^{- k/(s+1)} \leq \frac{1}{k^{c+2}} \leq \frac{1}{(c+2)n k^c}.$ 
Hence, in this case, \cref{examp:example} satisfies
		\begin{align*}
		\sum_{0\leq i\leq s} |\F_i|   &\geq  1+s\binom{n}{k}- \binom{k}{\leq c} \binom{n-k}{k}
		\ge s\binom{n}{k} - (c+1) k^c \left( \frac{s}{s+1} \right)^k \binom{n}{k}\\
		&> \left(s-\frac{c}{n}\right) \binom{n}{k}=(s+1)\frac{n-k}{n}\binom{n}{k}
		=(s+1)\binom{n-1}{k}.
		\end{align*}		
Towards \cref{conj:Frankl_crossunion}, Frankl \cite{F21} proved sporadic cases.
The main %theorem 
result in this paper is the following theorem, verifying a strong form of Conjecture~\ref{conj:Frankl_crossunion} and yielding the uniqueness of the extremal families. 

\begin{theorem}\label{thr:main}
	Let $n=sk+\ell$ with $1 \le \ell \le k$ and $s \ge 4\ell$.
	Suppose that $\mathcal F_0, \mathcal F_1, \dots, \mathcal F_s \subset \binom{[n]}{k}$ are non-empty and cross-union. Then 
	$$\frac{\lvert \mathcal F_0 \rvert+ \lvert \mathcal F_1 \rvert + \ldots +  \lvert \mathcal F_s \rvert}{s+1} 
	\le  \binom{n-1}{k}.
	$$
	Furthermore, equality is attained only if $\mathcal F_0=\ldots=\mathcal F_s=\binom{[n]\setminus\{i\}}{k}$ for some $i\in [n]$.
\end{theorem}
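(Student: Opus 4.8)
The plan is to compress the families to a shifted form and then prove the bound by induction on the number of families, after reformulating so that the target set $[n]$ is allowed to be an arbitrary initial segment of the ground set.

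\emph{Step 1: compression and the inductive statement.} Passing to complements $\G_i:=\{[n]\setminus F:F\in\F_i\}\subseteq\binom{[n]}{n-k}$, cross-union of $\F_0,\dots,\F_s$ is precisely cross-intersectingness of $\G_0,\dots,\G_s$; simultaneously left-compressing the $\G_i$ preserves this and all sizes, so after undoing the complementation we may assume each $\F_i$ is shifted towards the top of $[n]$. Then every nonempty $\F_i$ contains $L:=\{n-k+1,\dots,n\}$, the conjectured extremiser becomes the shifted family $\binom{[n]\setminus\{1\}}{k}$, and a standard de-compression argument restores the general index at the end. Reorder so that $\abs{\F_0}\le\dots\le\abs{\F_s}$. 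Choosing $F_0=L$ in the cross-union condition shows that $\F_1,\dots,\F_s$ admit no choice $F_i\in\F_i$ with $F_1\cup\dots\cup F_s\supseteq[n]\setminus L=[n-k]$; this is the $m=s$ case of the following claim, which I would prove by induction on $m\ge 2$: \emph{if $Y\subseteq[n]$ is an initial segment with $\abs Y=(m-1)k+\ell$ and $\G_1,\dots,\G_m\subseteq\binom{[n]}{k}$ admit no choice $F_i\in\G_i$ with $F_1\cup\dots\cup F_m\supseteq Y$, then $\sum_{i=1}^m\abs{\G_i}\le m\binom{n-1}{k}$, with equality only if all $\G_i$ equal a common $\binom{[n]\setminus\{y\}}{k}$.} Given the $m=s$ case, the minimal family $\F_0$ has size at most $\frac{1}{s}\sum_{i=1}^s\abs{\F_i}\le\binom{n-1}{k}$, hence $\sum_{i=0}^s\abs{\F_i}\le(s+1)\binom{n-1}{k}$; and in case of equality one checks quickly that $\F_1=\dots=\F_s=\binom{[n]\setminus\{y\}}{k}$ forces $\F_0$ to avoid $y$ as well, so all $s+1$ families coincide.

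\emph{Step 2: the inductive step.} Since the families are shifted towards the top of $[n]$, they are in particular shifted within any initial segment $Y$, so a family containing some $k$-subset of $Y$ then contains the $k$-set $K$ formed by the top $k$ elements of $Y$. Analyse the smallest family $\G_1$. If $\G_1$ contains a $k$-set inside $Y$, fix such a $K\in\G_1$; then $\G_2,\dots,\G_m$ admit no cover of the initial segment $Y\setminus K=[(m-2)k+\ell]$, so the induction hypothesis gives $\sum_{i\ge2}\abs{\G_i}\le(m-1)\binom{n-1}{k}$, and, $\G_1$ being smallest, $\abs{\G_1}\le\binom{n-1}{k}$; the two bounds add up. Otherwise every set of $\G_1$ meets the block of $(s+1-m)k$ free coordinates $[n]\setminus Y$, so $\G_1$ is confined to a much smaller subfamily, while through the sets $\{Y\setminus F_1:F_1\in\G_1\}$, all of whose supersets $\G_2,\dots,\G_m$ must avoid, the remaining families are also restricted; a direct trade-off estimate between $\abs{\G_1}$ and $\sum_{i\ge2}\abs{\G_i}$ then still yields the bound with room to spare.

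\emph{Step 3: base case and uniqueness, and where $s\ge4\ell$ enters.} For $m=2$ we have $\G_1,\G_2$ with no $F_1\cup F_2\supseteq Y$, $\abs Y=k+\ell$; the $Y$-complements of the traces form cross-intersecting families of large subsets of $Y$, and the enormous free block $[n]\setminus Y$ of size $(s-1)k$ forces the common-point star to be the unique optimum. It is exactly here that the hypothesis $s\ge 4\ell$ is needed: the estimates reduce to binomial comparisons of the type already displayed after \cref{examp:example}---in essence that $\binom{n-k}{k}/\binom{n}{k}\le(1-\frac{1}{s+1})^k$ must beat the polynomially many ``low'' traces counted by $\binom{k}{\le c}$ that a non-star could gain---and these hold once $k$ is large enough relative to $s/\ell$, which the hypothesis guarantees. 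Uniqueness is obtained by following equality through the argument: each compression must act as an isomorphism on the coordinates it touches, the bound $\abs{\G_1}\le\binom{n-1}{k}$ must be tight (whence $\G_1$ is a single star, the Erd\H{o}s--Ko--Rado extremiser seen through complements), and the recursion must terminate at the symmetric configuration, forcing $\F_0=\dots=\F_s=\binom{[n]\setminus\{i\}}{k}$.

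\emph{Main obstacle.} The crux is Step 3: the two-family base case and, for uniqueness, its near-extremal stability version, which is a genuine weighted cross-intersecting problem with Hilton--Milner-type competitors nearby and is the only place the precise constant in $s\ge4\ell$ must be squeezed out. The second delicate point is the off-target alternative in Step 2: the crude bound $\sum_{i\ge2}\abs{\G_i}\le(m-1)\binom{n}{k}$ is not quite enough when $m$ is small, so one has to quantify how strongly avoiding all supersets of the sets $Y\setminus F_1$ constrains $\G_2,\dots,\G_m$. The remaining ingredients---the compression bookkeeping, the ``minimum $\le$ average'' device that glues the induction, and propagating the equality analysis---are routine once these are in hand.
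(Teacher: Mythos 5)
Your reduction hinges on the inductive claim in Step 1: \emph{if $\G_1,\dots,\G_m\subseteq\binom{[n]}{k}$ admit no $F_1\cup\dots\cup F_m\supseteq Y$ for an initial segment $Y$ with $|Y|=(m-1)k+\ell$, then $\sum_i|\G_i|\le m\binom{n-1}{k}$, with the star as unique extremiser.} This claim is false for small $m$, so the induction cannot close. Concretely, take $m=2$ and $\ell=k$, so $|Y|=2k$ while the ground set still has $n=(s+1)k\ge (4k+1)k$ elements, and set
$$\G_1=\G_2=\Bigl\{F\in\tbinom{[n]}{k}\colon |F\cap Y|\le k-1\Bigr\}=\tbinom{[n]}{k}\setminus\tbinom{Y}{k}.$$
These families are nonempty and shifted towards the top of $[n]$ (replacing an element by a larger one cannot increase $|F\cap Y|$), and for any $F_1\in\G_1$, $F_2\in\G_2$ we have $|F_1\cap Y|+|F_2\cap Y|\le 2k-2<|Y|$, so $F_1\cup F_2\not\supseteq Y$. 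Yet $|\G_1|+|\G_2|=2\binom{n}{k}-2\binom{2k}{k}$, whereas the claimed bound is $2\binom{n-1}{k}=2\binom{n}{k}-2\binom{n-1}{k-1}$, and $\binom{n-1}{k-1}\gg\binom{2k}{k}$ in this range (e.g.\ $k=\ell=2$, $s=8$, $n=18$ gives $294>272$). The same phenomenon occurs for every $m$ well below $s$: shrinking the target $Y$ while keeping the ground set $[n]$ makes ``spread-out'' families $\{F:|F\cap Y|\le t\}$ nearly all of $\binom{[n]}{k}$ while trivially failing to cover $Y$; they beat the star, which sacrifices a full $\binom{n-1}{k-1}\approx\frac1{s+1}\binom{n}{k}$. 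So the extremal answer for your subproblem is not $m\binom{n-1}{k}$, the first branch of Step 2 recurses into a false statement, and the base case you flag as the ``main obstacle'' is not merely hard but has the wrong conjectured answer. (Only the $m=s$ instance survives, and that instance is essentially the theorem itself.)

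For comparison, the paper never decreases the number of families in this way. After shifting, it replaces the $s+1$ families by nested ones $\G_0\subset\dots\subset\G_s$ via repeated $(\cap,\cup)$ swaps, bounds the smallest family by Frankl's $r$-wise union theorem, and then runs a probabilistic Katona circle argument on $\sigma_\ell(\G_0),\G_1,\dots,\G_s$ — i.e.\ it trades the smallest family for its $\ell$-shadow so that the $s+1$ set sizes sum exactly to $n$ and the circle method applies with no slack. Lov\'asz's form of Kruskal--Katona plus two calculus-type lemmas (where $s\ge4\ell$ enters) convert the shadow gain into exactly the deficit $\frac{k-\ell}{n}$ needed, and the equality analysis of the circle lemma yields uniqueness. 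If you want to keep an inductive framework over the number of families, you would at minimum have to let the extremal value of the subproblem depend on $m$ and track the spread-out competitors explicitly, which is a substantially different (and harder) problem than the one you have set up.
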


In view of \cref{examp:example}, the linear bound $s\ge 4\ell$ above is best possible up to a logarithmic factor.

We remark that Conjecture~\ref{conj:Frankl_crossunion} has a clear connection with the Erd\H{o}s matching conjecture. A collection of $s$ sets in $[n]$ is a \emph{matching of size $s$} if they are pairwise disjoint.

\begin{conjecture}[The Erd\H{o}s matching conjecture~\cite{Erdos65}]
If $n \ge k(s+1)$ and $\F \subset \binom{[n]}{k}$ has no matching of size $s+1$, then $$
	\lvert \F \rvert \le \max \left \{ \binom{n}{k}-\binom{n-s}{k}, \binom{k(s+1)-1}{k} \right \}.$$
\end{conjecture}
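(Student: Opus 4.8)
The plan is to combine the shifting (compression) method with the cross-union results established in this paper. First I would reduce to \emph{shifted} families: the usual shift operators $S_{ij}$, which replace a larger coordinate $j$ by a smaller one $i$ in any member of $\F$ where this produces a new set, preserve $\abs{\F}$ and never increase the matching number $\nu(\F)$, so it suffices to prove the bound when $\F$ is stable under all $S_{ij}$. A shifted family with $\nu(\F)\le s$ carries strong structure — every member already meets a short initial segment of $[n]$ — and this is what powers every known partial result.

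Second, the endpoint $n=k(s+1)$ of the conjecture, where the claimed bound is $\binom{k(s+1)-1}{k}=\binom{n-1}{k}$, is a direct consequence of \cref{thr:main} whenever $s\ge 4k$. Indeed, take $\F_0=\dots=\F_s=\F$ with $n=k(s+1)$; since $\sum_{0\le i\le s}\abs{F_i}=k(s+1)=n$, the condition ``$F_0\cup\dots\cup F_s=[n]$'' is equivalent to ``$F_0,\dots,F_s$ pairwise disjoint'', so $\F$ has no matching of size $s+1$ exactly when these $s+1$ copies are cross-union. Applying \cref{thr:main} with $\ell=k$ (so the hypothesis $s\ge 4\ell$ becomes $s\ge 4k$) yields $\abs{\F}\le\binom{n-1}{k}=\binom{k(s+1)-1}{k}$, and even pins down the extremal configurations. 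The narrow remaining slice $n=k(s+1)$ with $s<4k$ would be handled by an \emph{ad hoc} shifting argument, in the spirit of the sporadic cases of \cite{F21}.

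Third, for the other extreme, $n$ large compared with $k(s+1)$, the target bound is $\binom{n}{k}-\binom{n-s}{k}$, attained by $\{A\in\binom{[n]}{k}:A\cap[s]\ne\varnothing\}$. Here I would induct on $n$: given shifted $\F$ with $\nu(\F)\le s$, split it into the part avoiding $n$, namely $\F(\bar n)=\{A\in\F:n\notin A\}\subseteq\binom{[n-1]}{k}$, and the link $\F(n)=\{A\setminus\{n\}:n\in A\in\F\}\subseteq\binom{[n-1]}{k-1}$; shiftedness lets one control the matching numbers of both pieces (sometimes gaining over the trivial bound $\le s$), and combining the two inductive estimates — with the crossover between the two candidate optima located by passing to the fractional matching number and optimizing — recovers the conjecture for $n$ above a constant multiple of $k(s+1)$, essentially Frankl's route.

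The hard part, and the reason the conjecture remains open, is the intermediate window where $n$ is only within a constant factor of $k(s+1)$: there the two extremal families are genuinely competitive, the naive induction leaks a constant factor at each step, and neither pure shifting nor the cross-union reduction above is tight. Closing this gap would require an exact solution of the fractional relaxation together with a stability statement (in the spirit of the junta method) showing that any near-optimal $\F$ is structurally close to one of the two canonical families, followed by a bootstrapping step; I expect this to be the main obstacle.
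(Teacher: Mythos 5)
The statement you were asked about is the Erd\H{o}s matching conjecture itself, which this paper only \emph{states} and cites as a long-standing open problem; it proves nothing about it beyond discussing related `cross' variants. Your submission is accordingly not a proof but a programme, and you concede this yourself: the final paragraph explicitly leaves open the regime where $n$ is within a constant factor of $k(s+1)$, which is precisely the unresolved heart of the conjecture. The pieces you do sketch are the known partial results rather than new ground: the reduction to shifted families is standard; the case of $n$ large compared with $k(s+1)$ is Erd\H{o}s's original theorem, with the best known ranges obtained by Frankl and others (cited in the paper as \cite{Frankl17,BDE76,HLS12,F13,FK18b}); and a hoped-for ``exact fractional relaxation plus junta-type stability plus bootstrapping'' is a wish list, not an argument. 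So there is a genuine gap --- indeed the gap is the conjecture itself --- and the attempt cannot be accepted as a proof of the stated bound $\lvert \F \rvert \le \max\bigl\{ \binom{n}{k}-\binom{n-s}{k}, \binom{k(s+1)-1}{k} \bigr\}$.

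Two smaller corrections to the parts you did argue. First, the endpoint $n=k(s+1)$ does not need \cref{thr:main} nor the restriction $s\ge 4k$: when $n=(s+1)k$, having no matching of size $s+1$ is equivalent to being $(s+1)$-wise union, and \cref{thr:r-wise} (Frankl's classical theorem, valid for all $r\ge 2$ with $n\le rk$) already gives $\lvert\F\rvert\le\binom{n-1}{k}=\binom{k(s+1)-1}{k}$, so no ``ad hoc shifting for $s<4k$'' is required; your detour through \cref{thr:main}, which concerns the \emph{sum} over possibly distinct families, only recovers a special case of what \cref{thr:r-wise} gives directly. Second, \cref{conj:Frankl_crossunion} and \cref{thr:main} concern the condition $F_0\cup\ldots\cup F_s\ne[n]$ with $n\le (s+1)k$, i.e.\ the property $U(s+1,n-1)$, whereas the matching conjecture for $n>k(s+1)$ corresponds to $U(s+1,k(s+1)-1)$; the paper's concluding section makes clear that the extremal behaviour in that regime is different (the family $\binom{[ks-1]}{k}$ repeated is no longer extremal), so the cross-union machinery of this paper does not transfer to the range where the bound $\binom{n}{k}-\binom{n-s}{k}$ is the conjectured optimum.
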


The Erd\H{o}s matching conjecture has been known to be true for $n$ sufficiently large in terms of $s$ and $k$ since the publication of Erd\H{o}s's paper~\cite{Erdos65}. Frankl~\cite{Frankl17} showed that the conjecture is also true if $n=k(s+1)+\ell$ for the range $0 \le \ell \le \varepsilon(k)(s+1)$, where $\varepsilon(k)>0$ is a constant only depending on $k$. There have been many interesting works \cite{BDE76,HLS12,F13, FK18b} that improved the range of $n$ for which the conjecture is known to hold.

In the case where $n=k(s+1)$ and $\F_0=\dots=\F_s=\F$, the collection $\{\F_0,\dots,\F_s\}$ is cross-union if and only if $\F$ has no matching of size $s+1$. From this, one can naturally consider several `cross' versions of the Erd\H{o}s matching conjecture.
We will discuss some variants of the `cross' version of the Erd\H{o}s matching conjecture in Section~\ref{sec:conc}.

%%%%%%%%%%%%%%%%%%
%%%%%%%%%%%%%%%%%%
%%%%%%%%%%%%%%%%%%

\section{Preliminaries}

For a set family $\F$, the shadow of $\F$ at level $s$ is defined by
	$$
	\sigma_{s}(\F)=\{G\colon |G|=s, \exists F\in \F \enskip \text{with} \enskip G\subset F\}.
	$$
	The following theorem by Frankl~\cite[Theorem~11.1]{F87} will be useful. %A family $\F$ is \emph{$(s+1)$-wise union} if $\bigcup_{0\leq i\leq s} A_i\neq [n]$ for every choice of sets $A_0, \ldots, A_s \in \F.$
	A family $\F$ is \emph{$r$-wise union} if $\bigcup_{1\leq i\leq r} A_i\neq [n]$ for every choice of sets $A_1, \ldots, A_r \in \F.$
\begin{theorem}[\cite{Fr76,F87}]\label{thr:r-wise}
Let $n, k$ and $r$ be positive integers with $r \ge 2$ and $n \le rk$. If $\F \subset \binom{[n]}{k}$ is an $r$-wise union family, then $\lvert \F \rvert \le \binom{n-1}{k}.$
Moreover, except for $r=2$ and $n=2k$, equality is attained only if $\F=\binom{[n] \backslash \{i\}}{k}$ for some $i \in [n].$
\end{theorem}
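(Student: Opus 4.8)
The plan is to use Frankl's shifting (left-compression) method, reducing first to compressed families and then to an Erd\H{o}s--Ko--Rado-type statement on intersecting families. \emph{Step 1 (reduction to compressed families).} For $i<j$ let $S_{ij}$ be the usual shift operator on families of $k$-sets, replacing each $F\in\F$ with $j\in F$, $i\notin F$, $F\setminus\{j\}\cup\{i\}\notin\F$ by $F\setminus\{j\}\cup\{i\}$ and fixing every other set; it preserves $\lvert\F\rvert$. One verifies by the standard exchange argument that $S_{ij}$ preserves being $r$-wise union: given $A_1,\dots,A_r\in S_{ij}(\F)$ with $\bigcup_t A_t=[n]$, take such a tuple minimising the number of sets created by $S_{ij}$ (each of which contains $i$ but not $j$); if two of the $A_t$ contain $i$, un-shift a created set to decrease the count, while if exactly one $A_p$ contains $i$, then all other $A_t$ lie in $\F$, and replacing $A_p$ by $A_p\setminus\{i\}\cup\{j\}$ together with replacing a set $A_q\ni j$ by $A_q\setminus\{j\}\cup\{i\}$ (which necessarily lies in $\F$) produces $r$ members of $\F$ covering $[n]$, a contradiction. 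Iterating until no shift acts, we may assume $\F$ is \emph{compressed}: a down-set in the componentwise order on $\binom{[n]}{k}$.

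\emph{Step 2 (the compressed case).} Passing to complements, $\F$ is $r$-wise union if and only if $\overline{\F}:=\{[n]\setminus A:A\in\F\}\subseteq\binom{[n]}{m}$ with $m:=n-k$ is \emph{$r$-wise intersecting} (any $r$ members share a common element), so the claim becomes: a compressed $r$-wise intersecting family of $m$-sets in $[n]$ with $m\le\frac{r-1}{r}n$ (which is exactly $n\le rk$) has at most $\binom{n-1}{m-1}$ members, with equality only for a star. For $r=2$ this is the Erd\H{o}s--Ko--Rado theorem for $m$-sets in $[n]$ (note $n\le 2k$ is equivalent to $n\ge 2m$), which also explains the exceptional pair $r=2$, $n=2k$, where --- as in Erd\H{o}s--Ko--Rado --- maximum families need not be stars. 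For $r\ge 3$ I would induct on $n$, splitting $\F$ along the top element $n$ into $\F_{\bar n}\subseteq\binom{[n-1]}{k}$ and the link $\F_n\subseteq\binom{[n-1]}{k-1}$: compression forces $S\cup\{x\}\in\F_{\bar n}$ whenever $S\in\F_n$ and $x\in[n-1]\setminus S$, so the upper shadow of $\F_n$ lies inside $\F_{\bar n}$, while being $r$-wise union forbids any $r-1$ members of $\F_{\bar n}$ from covering $[n-1]\setminus S$ for $S\in\F_n$; balancing $\lvert\F_{\bar n}\rvert+\lvert\F_n\rvert$ against $\binom{n-1}{k}=\binom{n-2}{k}+\binom{n-2}{k-1}$ via the Kruskal--Katona theorem would close the induction, with equality (and rigidity of the star for $r\ge 3$) tracked throughout.

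\emph{Main obstacle.} The technical heart is the inductive balancing in Step 2 when $n<rk$. The naive averaging argument --- fix a cover $[n]=Q_1\cup\dots\cup Q_r$ by $k$-sets, apply a uniform random permutation $\pi$, and use that $\pi(Q_1),\dots,\pi(Q_r)$ cover $[n]$ to get $\sum_t\Pr[\pi(Q_t)\in\F]\le r-1$, hence $\lvert\F\rvert\le\frac{r-1}{r}\binom{n}{k}$ --- is tight only for $n=rk$; when $n<rk$ even the star $\binom{[n-1]}{k}$ does not saturate it, so one genuinely needs the structural information about compressed families above, which is precisely the content of the cited theorem. The same structural input drives the uniqueness statement, whose only failure is the pair $r=2$, $n=2k$ already singled out.
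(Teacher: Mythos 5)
There is a genuine gap, and you essentially name it yourself. Note first that the paper does not prove Theorem~\ref{thr:r-wise} at all: it is imported as a black box from Frankl (the $r$-wise union/intersecting theorem, \cite{Fr76} and Theorem~11.1 of \cite{F87}), so there is no internal proof to compare with; your proposal must stand on its own. Your Step 1 (compressions preserve the $r$-wise union property, via the exchange argument or equivalently by passing to complements, where it is the standard preservation of $r$-wise intersection) is fine and standard, and the complementation in Step 2, translating the claim into ``an $r$-wise intersecting family of $m$-sets, $m=n-k\le\frac{r-1}{r}n$, has at most $\binom{n-1}{m-1}$ members, equality only for stars except $r=2$, $n=2m$,'' is a correct restatement. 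But that restatement \emph{is} Frankl's theorem; nothing has been gained yet. The actual content --- beating the averaging bound $\frac{r-1}{r}\binom{n}{k}$ when $n<rk$ for compressed families --- is only described in the conditional (``I would induct on $n$\dots balancing \dots via the Kruskal--Katona theorem would close the induction''), with no inequality actually derived from the two structural observations you list (upper shadow of the link contained in $\F_{\bar n}$; no $r-1$ sets of $\F_{\bar n}$ covering $[n-1]\setminus S$). Your closing ``Main obstacle'' paragraph concedes that this missing step ``is precisely the content of the cited theorem,'' so the proposal is a reduction plus a plan, not a proof; it is also not evident that the proposed $n$-induction with Kruskal--Katona closes at all, since Frankl's known arguments for the $r$-wise intersecting bound proceed differently (via structural properties of shifted families rather than a shadow-balancing induction).

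A second, smaller gap concerns uniqueness. Even granting the bound with equality characterization for \emph{compressed} families, you still must transport uniqueness back through the compressions: a family whose shifted image is a star need not itself be a star, so for $r\ge 3$ one has to argue that a maximum $r$-wise union family cannot be moved by a compression onto $\binom{[n-1]}{k}$ unless it was already of that form (up to relabelling). You flag the exceptional pair $r=2$, $n=2k$ correctly, but the rigidity statement for $r\ge 3$ is asserted (``tracked throughout'') rather than proved. Both issues would need to be filled in before this could replace the citation.
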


\subsection{Combinatorial lemmas}
In this section, we collect several combinatorial results that are needed for the proof of \cref{thr:main}.
A basic result of Frankl \cite{F87} (see \cref{lem:shifting} below) allows us to restrict ourself to {\em shifted} families. We say that a family $\F \subset \binom{[n]}{k}$ is shifted if for any $F=\{x_1,\ldots,x_k\} \in \F$ and any $G=\{y_1,\ldots,y_k\} \subset [n]$ such that $y_i \le x_i$ for every $1\le i \le k$, we have $G\in \F$. It is easy to see that if $\F \subset \binom{[n]}{k}$ is non-empty and shifted, then $[k]\in \F$.

\begin{lemma}[\cite{F87}]\label{lem:shifting}
Suppose that the families $\F_0,\ldots,\F_s \subset \binom{[n]}{k}$ are cross-union. Then there exist shifted and cross-union families $\F'_0,\ldots,\F'_s \subset \binom{[n]}{k}$ such that $\lvert \F_i\rvert =\lvert \F'_i\rvert$ for $0\le i \le s$.
\end{lemma}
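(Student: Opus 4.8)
The plan is to reduce to shifted families via the classical $(i,j)$-compression, applied \emph{simultaneously} to all of $\F_0,\ldots,\F_s$. For $1\le i<j\le n$ and $F\in\binom{[n]}{k}$, set $S_{ij}(F)=(F\setminus\{j\})\cup\{i\}$ when $j\in F$, $i\notin F$ and $(F\setminus\{j\})\cup\{i\}\notin\F$, and $S_{ij}(F)=F$ otherwise; then set $S_{ij}(\F)=\{S_{ij}(F)\colon F\in\F\}$. Two routine facts are needed. First, $F\mapsto S_{ij}(F)$ is injective on $\F$: if $S_{ij}$ were to move a set $F$ and fix a set $F'\neq F$ with $S_{ij}(F)=F'$, then $F'=(F\setminus\{j\})\cup\{i\}\notin\F$ by the definition, contradicting $F'\in\F$; hence $\lvert S_{ij}(\F)\rvert=\lvert\F\rvert$. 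Second, a family with $S_{ij}(\F)=\F$ for all $1\le i<j\le n$ is shifted (an easy induction on $\sum_m(x_m-y_m)$ in the definition of ``shifted''). Since $\Phi(\G_0,\ldots,\G_s):=\sum_{t=0}^{s}\sum_{F\in\G_t}\sum_{x\in F}x$ is a non-negative integer that strictly decreases whenever some $S_{ij}$ genuinely moves a set in some $\G_t$, repeatedly replacing the current tuple by $(S_{ij}(\G_0),\ldots,S_{ij}(\G_s))$, cycling through all pairs $i<j$, terminates after finitely many steps at a tuple $\F'_0,\ldots,\F'_s$ of shifted families with $\lvert\F'_t\rvert=\lvert\F_t\rvert$ for all $t$. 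Everything then comes down to showing that one simultaneous compression preserves the cross-union property, which I expect to be the only real obstacle.

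For this, suppose $\F_0,\ldots,\F_s$ are cross-union but $S_{ij}(\F_0),\ldots,S_{ij}(\F_s)$ are not, and fix $G_t\in S_{ij}(\F_t)$ with $\bigcup_t G_t=[n]$. Let $T_1=\{t\colon G_t\notin\F_t\}$. For $t\in T_1$ the definition forces $G_t=(F_t\setminus\{j\})\cup\{i\}$ for some $F_t\in\F_t$ with $j\in F_t$, $i\notin F_t$ and $(F_t\setminus\{j\})\cup\{i\}\notin\F_t$, so that $i\in G_t$, $j\notin G_t$, and $F_t=(G_t\setminus\{i\})\cup\{j\}$. If $T_1=\varnothing$, then $\bigcup_t G_t=[n]$ with every $G_t\in\F_t$, which contradicts the cross-union property; so $T_1\neq\varnothing$. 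Since $j\in[n]=\bigcup_t G_t$, some $G_{t_0}$ contains $j$; then $t_0\notin T_1$, so $G_{t_0}\in\F_{t_0}$, and because $S_{ij}$ fixes $G_{t_0}$ even though $j\in G_{t_0}$, we must be in one of two cases: \textbf{(a)} $i\in G_{t_0}$, or \textbf{(b)} $i\notin G_{t_0}$ but $(G_{t_0}\setminus\{j\})\cup\{i\}\in\F_{t_0}$.

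In case (a) I would take $H_t=(G_t\setminus\{i\})\cup\{j\}=F_t\in\F_t$ for $t\in T_1$ and $H_t=G_t\in\F_t$ for the other $t$; then $H_t\supseteq G_t\setminus\{i\}$ for every $t$ and $i\in H_{t_0}$, so $\bigcup_t H_t\supseteq\bigl((\bigcup_t G_t)\setminus\{i\}\bigr)\cup\{i\}=[n]$, again contradicting the cross-union property of $\F_0,\ldots,\F_s$. In case (b) I would instead take $H_{t_0}=(G_{t_0}\setminus\{j\})\cup\{i\}\in\F_{t_0}$, $H_t=F_t\in\F_t$ for $t\in T_1$, and $H_t=G_t\in\F_t$ for all remaining $t$; then $i\in H_{t_0}$, $j\in H_t$ for each $t\in T_1\neq\varnothing$, and any $x\in[n]\setminus\{i,j\}$ lies in some $G_{t'}$ with $H_{t'}$ differing from $G_{t'}$ only within $\{i,j\}$, so $x\in H_{t'}$; hence $\bigcup_t H_t=[n]$ with all $H_t\in\F_t$, a contradiction. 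So no simultaneous compression can destroy the cross-union property, which completes the reduction. The crux is precisely this ``undoing'' step — checking that a spanning selection from the compressed families can always be pulled back to a spanning selection from the original ones — whereas size-preservation and termination are standard.
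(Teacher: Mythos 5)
Your proof is correct. The paper does not prove Lemma~\ref{lem:shifting} itself, it simply cites \cite{F87}, and your argument is precisely the standard compression proof from that source: simultaneous $(i,j)$-shifts, injectivity of $S_{ij}$ for size preservation, the decreasing weight $\Phi$ for termination, and the two-case pull-back (according to whether the unmoved $j$-containing set $G_{t_0}$ contains $i$, or its $i$-shift already lies in $\F_{t_0}$) to show that a single simultaneous compression preserves the cross-union property; the only step you state without full detail, that an element of $S_{ij}(\F_{t_0})$ containing $j$ must be a fixed member of $\F_{t_0}$ (so cases (a)--(b) are exhaustive), follows immediately from your observation that moved images never contain $j$.
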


%The second lemma is another result of Frankl \cite[Lemma 2.4]{Frankl21}, which is a probabilistic version of Katona's circle method. We give its proof for completeness.

The second lemma is a probabilistic version of Katona's circle method.
 
\begin{lemma}\label{lem:Katonascircle_prob}
		Let $k_0, k_1, \ldots, k_s, n$ be positive integers with $k_0+k_1+\ldots+k_s\ge n.$
		Suppose that $\G_0 \subset \binom{[n]}{k_0},\G_1 \subset \binom{[n]}{k_1}, \ldots, \G_s \subset \binom{[n]}{k_s}$ are cross-union.
		Then $$\sum_{i=0}^s \frac{ \lvert \G_i \rvert }{\binom{n}{k_i}} \le s.$$
If $s\ge 2,$ $k_0=\ldots=k_s=k$, $n=(s+1)k$, and $\emptyset \ne \G_0\subset \G_1\subset \ldots \subset \G_s$, then the equality holds only if $\G_0=\G_1=\ldots=\G_s$. 
\end{lemma}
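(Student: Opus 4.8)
The plan is to prove the inequality by a (probabilistic) Katona circle argument and then to read off the equality case from the rigidity that tightness forces. For the inequality, fix a cyclic order of $[n]$ on a circle, and for a starting position $t\in\zz/n\zz$ place arcs $A_0,\ldots,A_s$ consecutively, where $A_i$ has length $k_i$ and begins where $A_{i-1}$ ends (and $A_0$ begins at $t$). Since $k_0+\cdots+k_s\ge n$, these arcs cover the circle, so by the cross-union hypothesis at least one $A_i$ carries a $k_i$-set not lying in $\G_i$. Let $g_i$ be the number of the $n$ length-$k_i$ arcs carrying a set of $\G_i$; as $A_i$ runs over all $n$ length-$k_i$ arcs when $t$ ranges over $\zz/n\zz$, summing the previous remark over $t$ gives $\sum_i(n-g_i)\ge n$, hence $\sum_i g_i\le sn$ for every cyclic order. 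Averaging over a uniformly random cyclic order, under which a fixed-length arc carries a uniformly random set of that size so that $\mathbb E[g_i]=n\lvert\G_i\rvert/\binom n{k_i}$, we get $\sum_i\lvert\G_i\rvert/\binom n{k_i}\le s$.

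For the equality case, assume $s\ge2$, $k_0=\cdots=k_s=k$, $n=(s+1)k$, $\emptyset\neq\G_0\subseteq\cdots\subseteq\G_s$, and that equality holds. Then $\sum_i g_i=sn$ for \emph{every} cyclic order, and since at each of the $n$ starting positions at least one arc of the consecutive placement is bad while the total number of bad incidences is exactly $n$, \emph{exactly one} arc is bad at every starting position. Now fix a partition $\mathcal D=\{D_0,\ldots,D_s\}$ of $[n]$ into $k$-sets and draw it on the circle with the blocks in some cyclic order; the $s+1$ consecutive placements whose arcs coincide with the blocks (one per cyclic rotation of the block-labels) then say that for each such rotation there is exactly one block $D$ sitting at a position $j$ with $D\notin\G_j$. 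By the chain condition, for a fixed block $D$ the set of indices $j$ with $D\notin\G_j$ is an initial segment $\{0,\ldots,c_D-1\}$, and ``exactly one bad block per rotation'' becomes the assertion that $\zz/(s+1)\zz$ is tiled by $s+1$ intervals of lengths $c_{D_0},\ldots,c_{D_s}$ located according to the blocks' circle-positions — and this must hold for every cyclic arrangement of the blocks. Since in no arrangement may an interval of length $\ge1$ be immediately followed by one of length $\ge2$, a short counting argument forces the multiset $\{c_{D_i}\}$ to be either $(1,\ldots,1)$ or $(s+1,0,\ldots,0)$.

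Consequently every partition of $[n]$ into $k$-sets has one of two forms: all $s+1$ blocks lie in $\G_1\setminus\G_0$, or $s$ blocks lie in $\G_0$ and the remaining one lies outside $\G_s$. As every $k$-set is a block of some partition, each $k$-set lies in $\G_1\cup(\binom{[n]}{k}\setminus\G_s)$, which forces $\G_s\subseteq\G_1$ and hence $\G_1=\cdots=\G_s=:\G$. If now $\G_0\subsetneq\G$, pick $M\in\G\setminus\G_0$; any $k$-set $B$ disjoint from $M$ extends, together with $M$, to a partition, which cannot be of the second form (as $M\notin\G_0$ and $M\in\G$), hence is of the first form and $B\in\G\setminus\G_0$. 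Thus $\G\setminus\G_0$ is nonempty and closed under passing to a disjoint $k$-set; but for $s\ge2$ the disjointness graph on $\binom{[n]}{k}$ is connected, so $\G\setminus\G_0=\binom{[n]}{k}$, contradicting $\G_0\neq\emptyset$. Therefore $\G_0=\G_1=\cdots=\G_s$.

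The inequality is routine once the consecutive-placement idea is in place; the main obstacle is the equality analysis — correctly pinning down what tightness forces about the pattern of bad arcs across all cyclic orders, and then distilling from the resulting tiling condition the clean dichotomy on partition types that drives the rest of the argument.
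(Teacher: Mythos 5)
Your proof is correct. The inequality part is essentially the paper's argument in different clothing: your averaging over random cyclic orders of consecutive arcs covering the circle is the same computation as the paper's random permutation applied to a fixed cover $A_0\cup\cdots\cup A_s=[n]$. For the equality case, both arguments start from the same rigidity statement — tightness forces that in every partition of $[n]$ into $k$-blocks, under every assignment of blocks to family-indices arising from a rotation, exactly one block is ``bad'' — but you exploit it differently. The paper first shows $\F_1=\cdots=\F_s$ (complements) by permuting which index each block receives, and then gets $\F_0=\F_1$ by an explicit swap of two blocks $B_0,B_1\mapsto B_0',B_1'$ to manufacture a partition with two bad blocks. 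You instead encode, for each $k$-set $D$, the initial segment $\{0,\dots,c_D-1\}$ of indices rejecting $D$, observe that exactly-one-bad-per-rotation is a tiling of $\zz/(s+1)\zz$ by intervals anchored at the blocks' positions, and use freedom in arranging the blocks to force the multiset $\{c_D\}$ to be all ones or $(s+1,0,\dots,0)$; this cleanly yields $\G_1=\cdots=\G_s$ and reduces the last step to connectivity of the Kneser graph (valid here since $n=(s+1)k\ge 2k+1$ for $s\ge 2$). Your tiling dichotomy is arguably more transparent about \emph{why} only the two partition types can occur, at the cost of invoking Kneser-graph connectivity where the paper uses a short self-contained swap; both routes are sound.
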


\noindent {\bf Remark.} The first part of \cref{lem:Katonascircle_prob} is a result of Frankl \cite[Lemma 2.4]{Frankl21}.

\begin{proof}[Proof of \cref{lem:Katonascircle_prob}]
Fix $s+1$ sets $A_0, \ldots, A_s$ satisfying $|A_0|=k_0,\ldots,|A_s|=k_s$ and $A_0\cup \ldots \cup A_s=[n]$. 
Let $(\Omega, \mathbb{P})$ be the probability space where $\Omega$ is the set of permutations of $[n]$ and $\mathbb{P}$ is the uniform measure on $\Omega$.
Let %$X_i \colon \Omega \rightarrow \{0,1\}\colon \alpha \mapsto 1_{\alpha(A_i)\in \G_i}$ 
$X_i \colon \Omega \rightarrow \{0,1\}$ be the random variable %that equals $1$ if $\alpha(A_i) \in \G_i$ and is $0$ otherwise.
defined by letting $X_i(\alpha)=1$ if $\alpha(A_i) \in \G_i$ and $X_i(\alpha)=0$ otherwise.
Let $X= \sum_{i=0}^s X_i.$
Choose a permutation $\alpha \in \Omega$ of $[n]$ uniformly at random. Since $\mathbb{P}(\alpha(A_i) \in \G_i)=\lvert \G_i \rvert/\binom{n}{k_i}$, we have $\mathbb{E}[X]=\sum_{i=0}^s \mathbb{E}[X_i] =\sum_{i=0}^s \lvert \G_i \rvert /\binom{n}{k_i}$, by linearity of expectation. On the other hand, the cross-union property implies $X \le s$, resulting in $\mathbb{E}[X] \le s$. Therefore, $\sum_{i=0}^s \lvert \G_i \rvert /\binom{n}{k_i} \le s$.

Now we deal with the equality part of the theorem. To ease the notation, let $\F_i= \binom{[n]}{k} \backslash \G_i$ for $0\le i\le s$. Since $\G_0\subset \G_1\subset \ldots\subset \G_s$, we have $\F_s \subset \F_{s-1} \subset \ldots \subset \F_0$. 

\begin{claim}\label{claim:partition1}
Let $[n]=B_0\cup \ldots \cup B_s$ be a partition of $[n]$ into $s+1$ sets of size $k$ each. Then there is exactly one $i \in \{0,1,\ldots,s\}$ for which $B_i \in \F_i$.
\end{claim}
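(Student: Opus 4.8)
The plan is to derive the claim from the cross-union hypothesis together with the standing equality $\sum_{i=0}^s \lvert\G_i\rvert/\binom{n}{k}=s$, via a first-moment (double-counting) argument over the ordered partitions of $[n]$ into $s+1$ blocks of size $k$; recall that in this situation $n=(s+1)k$.

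First I would record what cross-unionness gives: for \emph{every} ordered partition $[n]=C_0\cup\dots\cup C_s$ with $\lvert C_i\rvert=k$ we cannot have $C_i\in\G_i$ for all $i$ (that would realise the union $[n]$), so at least one index $i$ satisfies $C_i\in\F_i$. This is the ``at least one'' half, and crucially it holds for all such partitions, not just the given one. Next I would rewrite the equality: since $\F_i=\binom{[n]}{k}\setminus\G_i$, the identity $\sum_{i=0}^s\lvert\G_i\rvert/\binom{n}{k}=s$ is equivalent to $\sum_{i=0}^s\lvert\F_i\rvert=\binom{n}{k}$.

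Now comes the averaging. Let $N=n!/(k!)^{s+1}$ be the number of ordered partitions of $[n]$ into $s+1$ blocks of size $k$. For each fixed $i$, the $i$-th block of a uniformly random such partition is equidistributed over $\binom{[n]}{k}$, so exactly $N\lvert\F_i\rvert/\binom{n}{k}$ of these partitions have $C_i\in\F_i$. Summing over $i$ and using the rewritten equality, $\sum_{(C_0,\dots,C_s)}\#\{i:C_i\in\F_i\}=N\sum_{i=0}^s\lvert\F_i\rvert/\binom{n}{k}=N$. By the previous paragraph each of the $N$ ordered partitions contributes at least $1$ to this sum, so each contributes exactly $1$; applying this to $B_0,\dots,B_s$ gives the claim. (Equivalently, in the language of the circle method: for a uniformly random permutation $\alpha$ of $[n]$ one has $\mathbb{E}[\#\{i:\alpha(B_i)\in\F_i\}]=\sum_{i=0}^s\lvert\F_i\rvert/\binom{n}{k}=1$, and a random variable taking values in $\{1,2,\dots\}$ with mean $1$ is identically $1$.)

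I do not anticipate a genuine obstacle here: this is essentially a one-line computation once the equality hypothesis is rephrased, and it does not even use the nestedness $\G_0\subseteq\dots\subseteq\G_s$. The real content of the equality analysis — forcing $\G_0=\dots=\G_s$ — will come in the claims built on this one, where the nested structure and the combinatorics of which blocks can be the unique ``$\F$-block'' of a partition will matter.
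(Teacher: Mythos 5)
Your proof is correct and is essentially the paper's argument: the paper likewise observes that in the equality case the random variable $X=\#\{i:\alpha(A_i)\in\G_i\}$ from the first part of the lemma satisfies $X\le s$ and $\mathbb{E}[X]=s$, hence is identically $s$, which is exactly your statement that $s+1-X\ge 1$ with mean $1$ forces it to equal $1$ for every partition. Your double count over ordered partitions is just the finite-counting rendition of the same first-moment step, so there is nothing to flag.
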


\begin{poc}
Since $k_0=k_1=\ldots=k_s=k$, in the first part of the proof of \cref{lem:Katonascircle_prob} we have $[n]=A_0\cup \ldots \cup A_s$, $n=(s+1)k$ and $|A_0|=\ldots=|A_s|=k$, hence $[n]=A_0\cup \ldots \cup A_s$ is a partition of $[n]$ into $s+1$ sets of size $k$. Let $\alpha$ be a permutation of $[n]$ with $\alpha(A_i)=B_i$ for every $0\le i \le s$. We can infer from the first part of the proof of \cref{lem:Katonascircle_prob} that there is exactly one $i \in \{0,1,\ldots,s\}$ for which $\alpha(A_i) \in \F_i$. As $\alpha(A_i)=B_i$, this completes our proof.
\end{poc}

\begin{claim}\label{claim:F1-s}
$\F_1=\ldots=\F_s$.
\end{claim}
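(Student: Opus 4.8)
The plan is to argue by contradiction, using only the chain $\F_s\subseteq \F_{s-1}\subseteq \cdots \subseteq \F_0$ and \cref{claim:partition1}. If $\F_1,\ldots,\F_s$ are not all equal, then the chain $\F_s\subseteq \cdots\subseteq \F_1$ is not constant, so there exist $j\in\{1,\ldots,s-1\}$ and a $k$-set $A$ with $A\in\F_j$ but $A\notin\F_{j+1}$. I expect this single ``misbehaving'' set $A$ to already force a contradiction: the idea is to feed $A$ into \cref{claim:partition1} twice, once in a slot where $A$ is guaranteed to lie in the corresponding family, and once in a slot where it is guaranteed not to, and to compare what \cref{claim:partition1} says in the two situations.

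The key step is to show that \emph{every} $k$-subset $Q$ of $[n]\setminus A$ lies outside $\F_0$, and hence, since $\F_i\subseteq\F_0$ for all $i$, outside every $\F_i$. To see this, extend $\{A,Q\}$ to a partition of $[n]$ into $s+1$ sets of size $k$ (possible because $n=(s+1)k$), and apply \cref{claim:partition1} to the ordering that places $A$ in slot $j$ and $Q$ in slot $0$ (these slots are distinct since $j\ge 1$). As $A\in\F_j$, slot $j$ realises the unique index guaranteed by \cref{claim:partition1}, so no other slot does; in particular $Q\notin\F_0$. This is exactly the point where $j\ge 1$ matters: slot $0$ must be available for $Q$, which is why the statement is about $\F_1,\ldots,\F_s$ rather than about all of $\F_0,\ldots,\F_s$.

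To finish, take any partition $(B_0,\ldots,B_s)$ of $[n]$ into $k$-sets with $B_{j+1}=A$; this exists because $j+1\le s$. Then $B_{j+1}=A\notin\F_{j+1}$, while for every $i\ne j+1$ the block $B_i$ is a $k$-subset of $[n]\setminus A$, so $B_i\notin\F_i$ by the previous paragraph. Hence no index $i$ satisfies $B_i\in\F_i$, contradicting \cref{claim:partition1}. The proof itself is short; the main thing to be careful about is precisely the bookkeeping of the slot indices, namely that a strict drop along $\F_s\subseteq\cdots\subseteq\F_1$ forces $1\le j\le s-1$, which is exactly what leaves room both for an empty slot $0$ in the first application and for slot $j+1$ in the second.
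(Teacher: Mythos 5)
Your proof is correct, and it is essentially the paper's argument in contrapositive form: both proofs first place the distinguished set (your $A\in\F_j$, the paper's $B_1\in\F_1$) in its ``home'' slot to conclude via the uniqueness in \cref{claim:partition1} that every disjoint $k$-set lies outside $\F_0$ and hence outside all $\F_i$, and then reapply \cref{claim:partition1} to a rearranged partition. The only difference is cosmetic: the paper deduces directly that $B_1\in\F_s$, while you place $A$ in slot $j+1$ to contradict the existence part of \cref{claim:partition1}.
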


\begin{poc}
Since $\F_s\subset \F_{s-1}\subset \ldots \subset \F_0$, it suffices to show that $B_1\in \F_s$ whenever $B_1 \in \F_1$. Fix $B_1\in \F_1$, and consider a partition $[n]=B_0\cup B_1\cup \ldots\cup B_s$ of $[n]$ into size-$k$ sets. 

Given $j\in \{0,2,3,\ldots,s\}$, let $\pi$ be a permutation of $\{0,1,\ldots,s\}$ with $\pi(0)=j$ and $\pi(1)=1$. Applying \cref{claim:partition1} to the partition $[n]=B_{\pi(0)}\cup B_{\pi(1)}\cup \ldots\cup B_{\pi(s)}$ and noting that $B_{\pi(1)}=B_1\in \F_1$, we find $B_j=B_{\pi(0)} \notin \F_0$. Hence $B_0,B_2,\ldots,B_s$ do not belong to $\F_0=\F_0\cup\F_1\cup\ldots\cup \F_s$.

Consider a permutation $\tau$ of $\{0,1,\ldots,s\}$ with $\tau(s)=1$. By \cref{claim:partition1}, there exists $i\in \{0,1,\ldots,s\}$ such that $B_{\tau(i)}\in \F_i$. Since $B_0,B_2,\ldots,B_s \notin \F_0\cup\F_1\cup\ldots\cup \F_s$, we must have $\tau(i)=1$. It follows that $i=s$, and so $B_1 \in \F_s$, as required.
\end{poc}

\begin{claim}\label{claim:partition2}
Let $[n]=B_0\cup \ldots \cup B_s$ be a partition of $[n]$ into $s+1$ sets of size $k$ each. If $B_j\in \F_0\backslash\F_1$ for some $j$, then $B_0,\ldots,B_s\in \F_0\backslash\F_1$.
\end{claim}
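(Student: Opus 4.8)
The plan is to derive the claim purely from \cref{claim:partition1} and \cref{claim:F1-s}. Fix the partition $[n]=B_0\cup\ldots\cup B_s$ into size-$k$ sets and suppose $B_j\in\F_0\setminus\F_1$ for some $j$. By \cref{claim:F1-s} we have $\F_1=\F_2=\ldots=\F_s$, so the hypothesis reads: $B_j\in\F_0$ while $B_j\notin\F_i$ for every $i\ge 1$.

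First I would show that \emph{no} part lies in $\F_1$. Suppose for contradiction that $B_m\in\F_1$ for some $m$; since $B_j\notin\F_1$, necessarily $m\ne j$. Using the nesting $\F_s\subset\ldots\subset\F_0$ together with $\F_1=\ldots=\F_s$, the membership $B_m\in\F_1$ forces $B_m\in\F_i$ for every $i\in\{0,1,\ldots,s\}$. Since $s\ge 2$, we may choose a permutation $\pi$ of $\{0,1,\ldots,s\}$ with $\pi(0)=j$ and $\pi(1)=m$. Applying \cref{claim:partition1} to the partition $[n]=B_{\pi(0)}\cup\ldots\cup B_{\pi(s)}$ yields exactly one index $i$ with $B_{\pi(i)}\in\F_i$; but $i=0$ works (as $B_{\pi(0)}=B_j\in\F_0$) and so does $i=1$ (as $B_{\pi(1)}=B_m\in\F_1$), a contradiction. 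Hence $B_m\notin\F_1=\F_2=\ldots=\F_s$ for every $m\in\{0,1,\ldots,s\}$.

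It remains to show that every part lies in $\F_0$. Fix $m$ and pick a permutation $\pi$ of $\{0,1,\ldots,s\}$ with $\pi(0)=m$. By \cref{claim:partition1} there is exactly one index $i$ with $B_{\pi(i)}\in\F_i$, and by the previous paragraph $B_{\pi(i)}\notin\F_i$ for all $i\ge 1$; hence $i=0$ and $B_m=B_{\pi(0)}\in\F_0$. As $m$ was arbitrary, $B_0,\ldots,B_s\in\F_0$, which together with the previous paragraph gives $B_0,\ldots,B_s\in\F_0\setminus\F_1$, as desired. I do not expect a real obstacle here once \cref{claim:partition1} and \cref{claim:F1-s} are in hand: the argument is just a short bookkeeping of which part of a partition can be the single ``special'' one. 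The only points requiring a little care are invoking $s\ge 2$ so that there are enough indices to build the permutations $\pi$, and keeping the distinction between the hypothesis ``$B_j\notin\F_1$'' and the stronger statement ``no $B_m$ lies in $\F_1$'', which is precisely what the contradiction in the second paragraph establishes.
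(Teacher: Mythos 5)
Your proof is correct and follows essentially the same route as the paper: both arguments apply \cref{claim:partition1} to reorderings of the partition (moving a chosen part into position $0$) and use $\F_1=\ldots=\F_s$ from \cref{claim:F1-s} to rule out the indices $i\ge 1$. Your version just splits the bookkeeping into two explicit stages (no part lies in $\F_1$, then every part lies in $\F_0$), which the paper accomplishes with two consecutive applications of \cref{claim:partition1}.
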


\begin{poc}
Without loss of generality we can assume $B_0\in \F_0\backslash\F_1$. To prove the claim it suffices to show $B_1\in \F_0\backslash\F_1$. Applying \cref{claim:partition1} to the partition $[n]=B_0\cup B_1\cup\ldots\cup B_s$ and noting that $B_0\in \F_0$, we get $B_i\notin \F_i=\F_1$ for every $i\ge 1$. Again, we apply \cref{claim:partition1} to the partition $[n]=B_1\cup B_0\cup B_2\cup\ldots\cup B_s$ and find $B_1\in\F_0$. Therefore, we obtain $B_1\in \F_1\backslash\F_0$, as desired.
\end{poc}

\begin{claim}\label{claim:F0-1}
$\F_0=\F_1$.    
\end{claim}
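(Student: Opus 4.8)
The aim is to show $\F_0=\F_1$; since $\F_1\subseteq\F_0$ (recall $\F_s\subseteq\cdots\subseteq\F_0$), it is enough to prove $\F_0\setminus\F_1=\emptyset$. I would argue by contradiction, assuming there is some $A\in\F_0\setminus\F_1$, and then use \cref{claim:partition2} to ``spread'' membership in $\F_0\setminus\F_1$ first to every $k$-set disjoint from $A$, and from there to every $k$-set in $\binom{[n]}{k}$; this will contradict $\G_0\ne\emptyset$.

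The key step is to note that $\F_0\setminus\F_1$ is closed under passing to disjoint $k$-sets. Indeed, if $A'\in\F_0\setminus\F_1$ and $B'\in\binom{[n]}{k}$ with $A'\cap B'=\emptyset$, then since $n-|A'\cup B'| = (s-1)k\ge k$ (using $s\ge 2$) one can enlarge $\{A',B'\}$ to a partition $[n]=B_0'\cup\cdots\cup B_s'$ into $s+1$ sets of size $k$ with $B_0'=A'$ and $B_1'=B'$; as $B_0'=A'\in\F_0\setminus\F_1$, \cref{claim:partition2} forces $B'=B_1'\in\F_0\setminus\F_1$. Now take an arbitrary $B\in\binom{[n]}{k}$. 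Because $|A\cup B|\le 2k\le sk = n-k$, there is a $k$-set $C$ disjoint from both $A$ and $B$; applying the closure property to the pair $(A,C)$ and then to the pair $(C,B)$ yields $B\in\F_0\setminus\F_1$. Hence $\F_0\setminus\F_1=\binom{[n]}{k}$, so $\F_0=\binom{[n]}{k}$ and therefore $\G_0=\binom{[n]}{k}\setminus\F_0=\emptyset$, contradicting $\G_0\ne\emptyset$. This gives $\F_0=\F_1$.

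I do not anticipate a genuine obstacle here: the argument is a two-step propagation, and the only point requiring care is that all the auxiliary size-$k$ sets and the partitions used really exist, which is guaranteed precisely by the hypotheses $n=(s+1)k$ and $s\ge 2$. Conceptually, what is being used is that $\F_0\setminus\F_1$ is a union of connected components of the Kneser graph $K(n,k)$, which is connected — in fact of diameter $2$ — when $n=(s+1)k$ with $s\ge 2$; the hands-on version above simply avoids quoting this fact.
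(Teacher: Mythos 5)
Your argument is correct and follows essentially the same route as the paper: both proofs propagate membership in $\F_0\setminus\F_1$ through partitions of $[n]$ into $k$-sets via \cref{claim:partition2}. The only difference is the endgame --- the paper propagates just far enough to place a set of $\F_0\setminus\F_1$ alongside a fixed $C\in\F_1$ in one partition and contradicts \cref{claim:partition1}, whereas you propagate to all of $\binom{[n]}{k}$ (in two steps, since any two $k$-sets have a common disjoint $k$-set when $n=(s+1)k$ with $s\ge 2$) and contradict $\G_0\neq\emptyset$; both are valid.
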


\begin{poc}
Suppose to the contrary that $\F_0 \backslash \F_1 \ne \emptyset$.
Note that $\F_1 \ne \emptyset$, for otherwise we would have $\G_1=\ldots=\G_s=\binom{[n]}{k}$ and $\G_0=\emptyset$. Let $B_0\in \F_0\backslash\F_1$ and $C\in \F_1$. 

Consider a partition $[n]=B_0\cup B_1\cup\ldots\cup B_s$ of $[n]$ into size-$k$ sets. Because $B_0\in \F_0\backslash\F_1$, we can deduce from \cref{claim:partition2} that $B_0,B_1,\ldots,B_s\in \F_0\backslash\F_1$.
Since $|(B_0 \cup B_1) \backslash C| \ge |B_0\cup B_1|-|C|=k$, one can find a size-$k$ subset $B'_0 \subset (B_0 \cup B_1) \backslash C$. Let $B'_1=(B_0 \cup B_1) \backslash B'_0.$
Notice that $|B'_0|=|B'_1|=k$ and $B'_0\cup B'_1=B_0\cup B_1$. Thus, $[n]=B'_0\cup B'_1 \cup B_2 \cup \ldots \cup B_s$ is a partition of $[n]$ into sets of size $k$. As $B_s\in \F_0\backslash\F_1$, an application of \cref{claim:partition2} gives $B'_0,B'_1,B_2,\ldots,B_s\in \F_0 \backslash \F_1$. But now $B'_0 \in \F_0$ and $C \in \F_1$ are disjoint and one can extend to a partition $[n]=C \cup B'_0 \cup B'_2 \cup \ldots B'_s$, which contradicts \cref{claim:partition1}. This is sketched in Figure~\ref{fig:difpartitions} for $s=2.$
\end{poc}

The equality part follows from \cref{claim:F1-s} and \cref{claim:F0-1}. 
\end{proof}

%%%%%%%%%%%
%%%%%%%%%%%%%%%%%
%%%%%%%%%%%%%%%%%

\begin{figure}[h]
	\begin{center}
	\begin{tikzpicture}[scale=0.82]

\draw[fill=black!10!white] 	(0,0) rectangle (3,1);
\draw[fill=black!20!white] 	(6,0) rectangle (3,1);
\draw[fill=black!30!white] 	(6,0) rectangle (9,1);

\foreach \x in {0,3,6}
{
\draw[fill=black!50!white] 	(\x,1.5) rectangle (\x+1,2.5);
}

\draw[fill=black!20!white] 	(0,-1.5) rectangle (1,-0.5);
\draw[fill=black!20!white] 	(3,-1.5) rectangle (5,-0.5);

\draw[fill=black!10!white] 	(1,-1.5) rectangle (3,-0.5);
\draw[fill=black!10!white] 	(6,-1.5) rectangle (5,-0.5);

\draw[fill=black!30!white] 	(6,-1.5) rectangle (9,-0.5);
\draw[fill=black!10!white] 	(1,-2) rectangle (3,-3);
\draw[fill=black!10!white] 	(6,-2) rectangle (5,-3);
\draw[fill=black!30!white] 	(7,-3) rectangle (9,-2);
\draw[fill=black!30!white] 	(5,-3) rectangle (4,-2);

\foreach \x in {0,3,6}
{
\draw[fill=black!50!white] 	(\x,-2) rectangle (\x+1,-3);
}
\coordinate [label=center:$C$] (A) at (-1.5,2); 

\coordinate [label=center:$B_0 \cup B_1 \cup B_2$] (A) at (-1.5,0.5);
\coordinate [label=center:$ B_2$] (A) at (7.5,0.5);
\coordinate [label=center:$B'_0 \cup B'_1 \cup A_2$] (A) at (-1.5,-1);
\coordinate [label=center:$B'_0$] (A) at (2,-1);
\coordinate [label=center:$B'_0$] (A) at (5.5,-1);
\coordinate [label=center:$C \cup B'_0 \cup B'_2$] (A) at (-1.5,-2.5);

	\end{tikzpicture}
\end{center}
	\caption{Different partitions of $[n]$ for $s=2$ and the set $C$}
	\label{fig:difpartitions}
\end{figure}
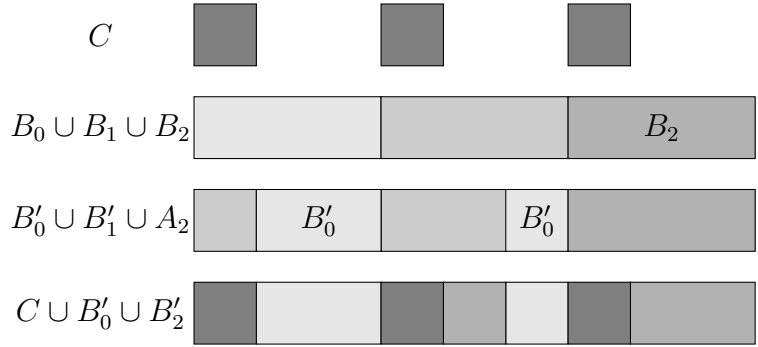

%%%%%%%%%%%%%
%%%%%%%%%%%%%%%%%
%%%%%%%%%%%%%%%
%%%%%%%%%%%%%%%%%%%

We will require the following slightly weaker version of the Kruskal–Katona theorem, due to Lov\'asz~\cite{Lovasz76}.
Here $\binom{x}{k}=\frac{x \cdot(x-1)\cdot \ldots \cdot (x-k+1)}{k!}$ is defined for every real number $x$ and integer $k$.
\begin{theorem}\label{thr:Lovasz}
Let $k\ge \ell>0$ be two integers and $x \ge k$ a real number. If $\F \subset \binom{[n]}{k}$ and $|\F|=\binom{x}{k}$, then $|\sigma_{\ell}(\F)| \ge \binom{x}{\ell}$.
\end{theorem}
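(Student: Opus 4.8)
The plan is to deduce this ``continuous'' (Lov\'asz) form of the Kruskal--Katona theorem from the classical version, by two successive reductions. For the first, note that every $\ell$-element subset of a $k$-set $F$ with $\ell<k$ lies inside some $(\ell+1)$-element subset of $F$, so $\sigma_\ell(\F)=\sigma_\ell(\sigma_{\ell+1}(\F))$; hence it suffices to prove the one-level bound $|\sigma_{k-1}(\F)|\ge\binom{x}{k-1}$ whenever $|\F|\ge\binom{x}{k}$ (equivalently, whenever $|\F|=\binom{x}{k}$). Indeed, granting this and starting from $|\F|=\binom{x}{k}$, write $|\sigma_{k-1}(\F)|=\binom{x'}{k-1}$ with $x'\ge x$ --- the map $t\mapsto\binom{t}{k-1}$ is an increasing bijection of $[k-2,\infty)$ onto $[0,\infty)$ --- then apply the one-level bound to $\sigma_{k-1}(\F)$ to get $|\sigma_{k-2}(\F)|=|\sigma_{k-2}(\sigma_{k-1}(\F))|\ge\binom{x'}{k-2}\ge\binom{x}{k-2}$, and repeat; after $k-\ell$ steps this yields $|\sigma_\ell(\F)|\ge\binom{x}{\ell}$.

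For the second reduction, write $m=|\F|$ in its $k$-cascade form $m=\binom{a_k}{k}+\binom{a_{k-1}}{k-1}+\cdots+\binom{a_r}{r}$ with integers $a_k>a_{k-1}>\cdots>a_r\ge r\ge1$; the classical Kruskal--Katona theorem gives $|\sigma_{k-1}(\F)|\ge\binom{a_k}{k-1}+\binom{a_{k-1}}{k-2}+\cdots+\binom{a_r}{r-1}$. (If one prefers not to invoke Kruskal--Katona, this cascade bound is provable directly: replace $\F$ by a shifted family --- which preserves $|\F|$ and does not increase $|\sigma_{k-1}(\F)|$ --- and induct on $n$ after splitting off the top coordinate, using that shiftedness forces $\F_1\subseteq\sigma_{k-1}(\F_0)$, where $\F_0=\{F\in\F:n\notin F\}$ and $\F_1=\{F\setminus\{n\}:n\in F\in\F\}$, whence $|\sigma_{k-1}(\F)|=|\sigma_{k-1}(\F_0)|+|\sigma_{k-2}(\F_1)|$.) In either case the theorem is reduced to the numerical inequality: if $\binom{a_k}{k}+\cdots+\binom{a_r}{r}=\binom{x}{k}$ with the $a_i$ strictly decreasing and $x\ge k$, then $\binom{a_k}{k-1}+\cdots+\binom{a_r}{r-1}\ge\binom{x}{k-1}$.

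This numerical inequality I would prove by induction on $k$, peeling off the leading term. The cascade form forces $a_k=\lfloor x\rfloor$. Put $m'=\binom{x}{k}-\binom{a_k}{k}$; if $m'=0$ then $x=a_k$ and the inequality is an equality, so assume $m'\ge1$ and let $x_1\ge k-1$ be the real number with $\binom{x_1}{k-1}=m'$. Then $\binom{a_{k-1}}{k-1}+\cdots+\binom{a_r}{r}$ is exactly the $(k-1)$-cascade of $m'$, so the induction hypothesis yields $\binom{a_{k-1}}{k-2}+\cdots+\binom{a_r}{r-1}\ge\binom{x_1}{k-2}$, and it remains to verify that
\[
\binom{a_k}{k}+\binom{x_1}{k-1}=\binom{x}{k},\quad a_k=\lfloor x\rfloor\quad\Longrightarrow\quad\binom{a_k}{k-1}+\binom{x_1}{k-2}\ge\binom{x}{k-1}.
\]
One checks this by fixing the integer $a_k$ and regarding both sides as smooth functions of $x\in[a_k,a_k+1]$ (so $x_1$ ranges over $[k-1,a_k]$): the two sides coincide at $x=a_k+1$ (where $x_1=a_k$) and the left side strictly exceeds the right at $x=a_k$, and the inequality in between follows from the convexity of $t\mapsto\binom{t}{j}$ on $[j-1,\infty)$ together with the Pascal identity $\binom{t}{k}=\binom{t-1}{k}+\binom{t-1}{k-1}$.

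The only genuinely non-formal step is this last two-variable estimate, and it is where the real care is required --- a crude term-by-term comparison of the two sides along the cascade breaks down, so one must genuinely exploit the convexity of the binomial function. Everything else (the reduction to consecutive levels, the appeal to or direct proof of the cascade shadow bound, and the cascade bookkeeping) is routine.
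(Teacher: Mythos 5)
First, a framing point: the paper does not prove this statement at all --- it is quoted as a known theorem of Lov\'asz with a citation --- so there is no in-paper argument to compare against and your proposal must stand on its own. Your two reductions are sound: $\sigma_\ell(\F)=\sigma_\ell(\sigma_{\ell+1}(\F))$ together with the monotonicity of $t\mapsto\binom{t}{j}$ correctly reduces the theorem to the one-level bound, and the classical Kruskal--Katona theorem correctly reduces that to the numerical claim that the cascade shadow of $\binom{x}{k}$ is at least $\binom{x}{k-1}$. The cascade bookkeeping (uniqueness of the representation, $a_k=\lfloor x\rfloor$, the tail being the $(k-1)$-cascade of $m'$) is also fine.

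The gap is exactly the step you flag, and it is a genuine gap, not a routine verification. The two-variable inequality is true, but ``the sides agree at $x=a_k+1$, differ by $1$ at $x=a_k$, and convexity plus Pascal handles the interior'' is not a proof: knowing a continuous function equals $1$ at one endpoint and $0$ at the other says nothing about its sign in between. Moreover, the natural convexity estimates pull in opposite directions. Writing $a=a_k$ and $x=a+\theta$, the chord bound for the convex function $t\mapsto\binom{t}{j}$ gives $\binom{a+\theta}{k-1}\le\binom{a}{k-1}+\theta\binom{a}{k-2}$, which is the right kind of upper bound on the right-hand side; but the same chord bound gives $\binom{x_1}{k-1}=\binom{a+\theta}{k}-\binom{a}{k}\le\theta\binom{a}{k-1}$, i.e.\ it bounds $x_1$, and hence $\binom{x_1}{k-2}$, from \emph{above} rather than below, so the two estimates cannot be chained. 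Since the inequality becomes an equality as $x\to a+1$, any correct argument must be exact in that limit, and nothing in your sketch supplies that; this step has to be written out in full. You may find it easier to abandon the cascade route altogether and prove the one-level bound directly by the standard induction on shifted families: with $\F_0=\{F\in\F:1\notin F\}$ and $\F_1=\{F\setminus\{1\}:1\in F\in\F\}$ one has $\lvert\sigma_{k-1}(\F)\rvert\ge\lvert\F_1\rvert+\lvert\sigma_{k-2}(\F_1)\rvert$ and $\sigma_{k-1}(\F_0)\subset\F_1$; if $\lvert\F_1\rvert<\binom{x-1}{k-1}$ then $\lvert\F_0\rvert>\binom{x-1}{k}$ and induction on $n$ forces $\lvert\F_1\rvert\ge\lvert\sigma_{k-1}(\F_0)\rvert\ge\binom{x-1}{k-1}$, a contradiction; then $\lvert\sigma_{k-1}(\F)\rvert\ge\binom{x-1}{k-1}+\binom{x-1}{k-2}=\binom{x}{k-1}$ by Pascal and induction on $k$, with no delicate real-variable inequality needed.
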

\subsection{Technical lemmas}\label{subsec:comp}
The following two technical lemmas will be used.
\begin{lemma}\label{lem:computation}
Let $n=ks+\ell$ with $1 \le \ell\le k$ and $s\ge 4\ell$. The following holds.
\begin{itemize}
    \item[\rm (i)] If $k\ge 2\ell$, then $(s+1)\binom{n-1}{k}-s\binom{n}{k}+\binom{ks}{k} \ge \frac{\ell}{k}\binom{n}{k}$.
    \item[\rm (ii)] If $k<2\ell$, then $(s+1)\binom{n-1}{k}-s\binom{n}{k}+\binom{ks}{k} \ge \binom{(1-1/k)n+1}{k}$.
\end{itemize}
\end{lemma}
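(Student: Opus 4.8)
The plan is to reduce both parts of \cref{lem:computation} to elementary estimates on the ratio $\binom{ks}{k}/\binom{n}{k}$. Starting from $\binom{n-1}{k}=\frac{n-k}{n}\binom{n}{k}$ and $(s+1)(n-k)-sn=n-(s+1)k=(ks+\ell)-(s+1)k=\ell-k$, one obtains the exact identity $(s+1)\binom{n-1}{k}-s\binom{n}{k}=\frac{\ell-k}{n}\binom{n}{k}$; hence the left-hand side of both (i) and (ii) equals $\binom{ks}{k}-\frac{k-\ell}{n}\binom{n}{k}$. Dividing by $\binom{n}{k}$, it suffices to prove
\[\frac{\binom{ks}{k}}{\binom{n}{k}}-\frac{k-\ell}{n}\ \ge\ \frac{\ell}{k}\quad(k\ge2\ell),\qquad\text{and}\qquad\frac{\binom{ks}{k}}{\binom{n}{k}}-\frac{k-\ell}{n}\ \ge\ \frac{\binom{(1-1/k)n+1}{k}}{\binom{n}{k}}\quad(k<2\ell).\]
The engine for the left-hand side in both cases is the telescoping identity $\frac{\binom{ks}{k}}{\binom{n}{k}}=\prod_{j=1}^{\ell}\big(1-\frac{k}{ks+j}\big)$, which together with $\frac{k}{ks+j}\le\frac1s$ and Bernoulli's inequality gives $\frac{\binom{ks}{k}}{\binom{n}{k}}\ge\big(1-\frac1s\big)^{\ell}\ge1-\frac\ell s$; I will also use the trivial bound $\frac{k-\ell}{n}<\frac kn\le\frac1s$.

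\emph{Case $k\ge2\ell$.} Here $\frac\ell k\le\frac12$, and $s\ge4\ell\ge2(\ell+1)$ forces $\frac{\ell+1}{s}\le\frac12$. Plugging in the bounds above, $\frac{\binom{ks}{k}}{\binom{n}{k}}-\frac{k-\ell}{n}\ge\big(1-\frac\ell s\big)-\frac1s=1-\frac{\ell+1}{s}\ge\frac12\ge\frac\ell k$, which is (i).

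\emph{Case $k<2\ell$.} First dispose of $k=1$, which forces $\ell=1$ and reduces (ii) to the trivial $n-1\ge1$; so assume $k\ge2$. Since $\ell>k/2$ and $s\ge4\ell$, we get $s>2k$ and hence $n=ks+\ell>2k^2$, so $\frac kn<\frac1{2k}\le\frac14$ and $\frac\ell s\le\frac14$; therefore the left-hand side exceeds $\big(1-\frac14\big)-\frac14=\frac12$. For the right-hand side, the factor $\frac{(1-1/k)n+1-i}{n-i}$ is positive for $0\le i\le k-1$ and decreasing in $i$ (its $i$-derivative has the sign of $1-\frac nk<0$), so
\[\frac{\binom{(1-1/k)n+1}{k}}{\binom{n}{k}}=\prod_{i=0}^{k-1}\frac{(1-1/k)n+1-i}{n-i}\le\Big(1-\frac1k+\frac1n\Big)^{k}\le e^{-1+k/n}<e^{-3/4}<\frac12,\]
using $1+x\le e^x$ and $\frac kn<\frac14$. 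Comparing with the lower bound on the left-hand side proves (ii).

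I expect the only genuinely delicate point to be the second case: the two sides of (ii) differ merely by a constant ($\tfrac12$ versus $e^{-3/4}\approx 0.47$), so the estimates cannot be wasteful — in particular, it is the exponential bound $(1-1/k+1/n)^k\le e^{-3/4}$, rather than a crude estimate of $(1-1/k)^k$, that makes the argument go through uniformly for all $k\ge2$ (a naive estimate would be too lossy for small $k$). Carefully verifying this final chain of inequalities, along with the positivity and monotonicity of the binomial-ratio factors, is where the remaining (still routine) work lies.
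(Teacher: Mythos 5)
Your proof is correct. Part (i) is essentially the paper's argument: both reduce the left-hand side to $\bigl(1-\tfrac{\ell+1}{s}\bigr)\binom{n}{k}$ via the identity $(s+1)\binom{n-1}{k}-s\binom{n}{k}=\tfrac{\ell-k}{n}\binom{n}{k}$ and the Bernoulli bound $\binom{ks}{k}\ge(1-\tfrac1s)^{\ell}\binom{n}{k}$, then compare with $\tfrac{\ell}{k}\le\tfrac12$. Part (ii) is where you genuinely diverge. The paper stays combinatorial: it rewrites the left-hand side as $\binom{ks}{k}-\binom{n-1}{k-1}+\tfrac{\ell}{n}\binom{n}{k}$, bounds $\binom{n-1}{k-1}\le k\binom{n-2k}{k-1}$ using $n\ge 2k^2+2k$, and telescopes $\binom{n-k}{k}-\binom{n-2k}{k}=\sum_m\binom{m}{k-1}\ge k\binom{n-2k}{k-1}$ to land on the stronger intermediate bound $\binom{n-2k}{k}\ge\binom{(1-1/k)n+1}{k}$. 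You instead normalize by $\binom{n}{k}$ and run a two-sided numerical comparison: the left ratio exceeds $\tfrac12$ by the same Bernoulli estimates as in (i), while the right ratio is at most $(1-\tfrac1k+\tfrac1n)^k\le e^{-3/4}<\tfrac12$ (your monotonicity and positivity checks on the factors, and the separate treatment of $k=1$, are all sound, and the constants do work out since $e^{3/4}>2$). Your route is more uniform and avoids the shadow-type binomial telescoping, at the cost of a tighter constant margin ($\tfrac12$ versus $e^{-3/4}\approx0.47$); the paper's route yields the cleaner intermediate inequality with $\binom{n-2k}{k}$, which is slightly more information than the lemma demands but makes each step a crude comparison of binomial coefficients.
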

\begin{proof}
(i) For $x\ge ks$, we have
		$$\binom{x-1}{k}=\frac{x-k}{x}\binom{x}{k} \ge \left( 1- \frac 1s \right) \binom{x}{k}.$$
		By iterating this and noting that $n-ks=\ell$, we obtain $\binom {ks}{k} \ge  \left( 1- \frac 1s \right)^\ell \binom{n}{k} \ge \left( 1- \frac {\ell}s \right) \binom{n}{k}$, where the second inequality is true by Bernoulli's inequality. Since $n\ge ks$, we obtain $\binom{n-1}{k} \ge \left( 1- \frac {1}s \right) \binom{n}{k}$.
		Therefore,
		$$(s+1)\binom{n-1}{k}-s\binom{n}{k}+\binom{ks}{k} \ge
		\left(1- \frac {\ell+1}{s}\right) \binom{n}{k} \ge \frac{\ell}{k}\binom{n}{k}$$
		assuming $k\ge 2\ell$ and $s\ge 4\ell$.
		
(ii) Since $2\ell\ge k+1$ and $s\ge 4\ell$, we have $n\ge ks \ge 2k^2+2k$. Thus $$\frac{\binom{n-1}{k-1}}{\binom{n-2k}{k-1}} \le \left( \frac{n-k+1}{n-3k+2}\right)^{k-1}\le \left( 1+\frac{1}{k} \right)^{k-1}\le k.$$		
It follows that 
\begin{align*}
(s+1)\binom{n-1}{k}-s\binom{n}{k}+\binom{ks}{k}&\ge\binom{ks}{k} -\binom{n-1}{k-1}\\
& \ge \binom{n-k}{k}-k\binom{n-2k}{k-1}\\
&\ge \binom{n-2k}{k}\\
&\ge \binom{(1-1/k)n+1}{k},
\end{align*}
where in the first line we used $(s+1)\binom{n-1}{k}=\left(s-\frac{k-\ell}{n}\right)\binom{n}{k}=s\binom{n}{k}-\binom{n-1}{k-1}+\frac{\ell}{n}\binom{n}{k}$,
the third inequality holds since $\binom{n-k}{k}-\binom{n-2k}{k}=\sum_{m=n-2k}^{n-k-1}\binom{m}{k-1} \ge k\binom{n-2k}{k-1}$, and in the last inequality we used $n\ge 2k^2+2k$.
\end{proof}

\begin{lemma}\label{lem:different-slices}
 Let $k, \ell$ and $ n$ be integers with $1 \le \ell\le k<n$.
		Let $x_0 \in [k,n-1]$ be a real number for which 
		\begin{equation}\label{eq:assumption}
		\frac{\binom{x_0}{\ell}}{\binom{n}{\ell}}  \le \frac{k}{\ell} \frac{\binom{x_0}{k}}{\binom{n}{k}}.
		\end{equation}
		Then
		$$\frac{ \binom{x_0}{\ell}}{\binom{n}{\ell}}\ge \frac{ \binom{x_0}{k}}{\binom{n}{k}} + \frac{k-\ell}{n}.$$   
    Furthermore, the equality occurs if and only if either $\ell=k$, or $\ell<k$ and $x_0=n-1.$
\end{lemma}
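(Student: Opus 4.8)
The plan is to view the two ratios appearing in the statement as the values at $x=x_0$ of the polynomial functions $f(x):=\binom{x}{\ell}/\binom{n}{\ell}$ and $g(x):=\binom{x}{k}/\binom{n}{k}$ of a real variable $x$, and to study $F(x):=f(x)-g(x)-\frac{k-\ell}{n}$ on the interval $[k,n-1]$. If $\ell=k$ then $f\equiv g$, and both the hypothesis and the conclusion hold with equality, so from now on assume $1\le \ell<k$. A one-line computation gives $f(n-1)=\frac{n-\ell}{n}$ and $g(n-1)=\frac{n-k}{n}$, hence $F(n-1)=0$. Thus it suffices to prove that $F$ is non-increasing on the subinterval $[x_0,n-1]$: then $F(x_0)\ge F(n-1)=0$, which is precisely the asserted inequality.

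First I would record two elementary facts. From $\binom{x}{\ell}/\binom{x}{k}=\frac{k!}{\ell!}\prod_{j=\ell}^{k-1}\frac{1}{x-j}$ we see that $h(x):=f(x)/g(x)$ is a positive constant times a strictly decreasing positive function on $[k,\infty)$, so the hypothesis $h(x_0)\le k/\ell$ propagates to $h(x)\le k/\ell$, i.e.\ $f(x)\le\frac{k}{\ell}g(x)$, for all $x\in[x_0,n-1]$. Next, using $\frac{d}{dx}\binom{x}{m}=\binom{x}{m}\sum_{j=0}^{m-1}\frac{1}{x-j}$, one obtains
\[
F'(x)=\bigl(f(x)-g(x)\bigr)\sum_{j=0}^{\ell-1}\frac{1}{x-j}-g(x)\sum_{j=\ell}^{k-1}\frac{1}{x-j}.
\]
So the whole argument reduces to the claim that $F'(x)\le 0$ whenever $x\ge k$ and $f(x)\le\frac{k}{\ell}g(x)$.

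This claim is the heart of the proof, and it is purely elementary. From $f(x)\le\frac{k}{\ell}g(x)$ we get $f(x)-g(x)\le\frac{k-\ell}{\ell}g(x)$, so, since $g(x)>0$, it is enough to verify
\[
\frac{k-\ell}{\ell}\sum_{j=0}^{\ell-1}\frac{1}{x-j}\ \le\ \sum_{j=\ell}^{k-1}\frac{1}{x-j}.
\]
Each of the $\ell$ terms on the left is at most $\frac{1}{x-\ell+1}$, so the left side is at most $\frac{k-\ell}{x-\ell+1}$; each of the $k-\ell$ terms on the right is at least $\frac{1}{x-\ell}$, so the right side is at least $\frac{k-\ell}{x-\ell}$; and since $x\ge k>\ell$ these denominators are positive with $x-\ell+1>x-\ell$, which gives the inequality. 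This proves the claim, hence $F(x_0)\ge 0$.

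Finally, the equality case. If $F(x_0)=0$, then since $F$ is non-increasing on $[x_0,n-1]$ and vanishes at both endpoints, $F\equiv 0$ and $F'\equiv 0$ on $(x_0,n-1)$; but $F'(x)=0$ forces equality in $f(x)-g(x)\le\frac{k-\ell}{\ell}g(x)$ (the only inequality that was applied to the positive quantity $\sum_{j=0}^{\ell-1}\frac{1}{x-j}$), i.e.\ $h(x)=k/\ell$, throughout $(x_0,n-1)$, which is impossible for the strictly decreasing function $h$ unless this interval is degenerate, i.e.\ $x_0=n-1$. Conversely $F(n-1)=0$, so together with the case $\ell=k$ we obtain the stated characterisation of equality. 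The one point where a genuine idea is needed is recognizing that the hypothesis is exactly what permits the substitution $f-g\le\frac{k-\ell}{\ell}g$, after which the derivative inequality collapses to the trivial comparison of harmonic sums above; everything else is routine bookkeeping.
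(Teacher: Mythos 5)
Your proof is correct and follows essentially the same route as the paper's: both consider the difference of the two normalized binomial ratios as a function of a real variable, compute its logarithmic-derivative expression, propagate the hypothesis using the monotonicity of $\binom{x}{\ell}/\binom{x}{k}$, and close with the same comparison of harmonic sums; the equality case is likewise settled by strictness of the derivative inequality on a nondegenerate interval.
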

\begin{proof}
We write $A(x)=\frac{\binom{x}{k}}{\binom{n}{k}}$ and $B(x)=\frac{\binom{x}{\ell}}{\binom{n}{\ell}}$. Consider the function $f(x)=B(x)-A(x)$, where $x_0 \le x \le n-1$. We wish to show $f(x_0) \ge f(n-1)=\frac{k-\ell}{n}$. 	

Notice first that 
\begin{equation}\label{eq:derivative}
f'(x)=B(x)\left( \frac{1}{x}+\frac{1}{x-1} + \ldots + \frac{1}{x-\ell+1} \right)  -  A(x) \left( \frac{1}{x}+\frac{1}{x-1} + \ldots + \frac{1}{x-k+1} \right).    
\end{equation}

By~\eqref{eq:assumption}, we have $\frac{A(x_0)}{B(x_0)}\ge \frac{\ell}{k}$. Hence	
	\begin{align}\label{eq:comp_ratiobinomials}
	\frac{A(x)}{B(x)}=
	\prod_{i=\ell}^{k-1} \frac{x-i}{n-i}&\ge \prod_{i=\ell}^{k-1} \frac{x_0-i}{n-i}
	=\frac{A(x_0)}{B(x_0)}\ge \frac{\ell}{k}.
	\end{align}
	
As $\frac{1}{x} \le \frac{1}{x-1} \le   \ldots \le \frac{1}{x-\ell+1} \le \ldots \le \frac{1}{x-k+1}$, we see that
	\begin{equation}\label{eq:comp_sumreciprosals}
	\frac{1}{x}+\frac{1}{x-1} + \ldots + \frac{1}{x-k+1}\ge \frac{k}{\ell}\left(\frac{1}{x}+\frac{1}{x-1}+ \ldots + \frac{1}{x-\ell+1}\right).
	\end{equation}

	From \eqref{eq:derivative}, \eqref{eq:comp_ratiobinomials} and \eqref{eq:comp_sumreciprosals}, we conclude
	$f'(x)\le 0$ for every $x\in [x_0,n-1]$. Thus $f(x_0)\ge f(n-1)=\frac{k-\ell}{n}$, as desired.

Now assume $\ell<k$ and $x_0<n-1.$ Since the central inequality in \eqref{eq:comp_ratiobinomials} is strict for  $\ell<k$ and $x_0<x<n-1,$ we have $f'(x)<0$ and thus $f(x_0)> f(n-1)=\frac{k-\ell}{n}$, i.e., the inequality is strict.
\end{proof}

%%%%%%%%%%%%%%%%%%%%
%%%%%%%%%%%%%%%%%%%%
%%%%%%%%%%%%%%%%%%%%

\section{Proof of Frankl's conjecture}

We are now ready to prove  \cref{thr:main}. 
\begin{proof}[Proof of \cref{thr:main}]
Let $\F_0,\F_1,\ldots,\F_s\subset{[n]\choose k}$ be non-empty cross-union families maximizing the sum $\sum_{i=0}^s|\F_i|$. Considering $\F_0=\F_1=\ldots=\F_s={[n]\setminus\{1\}\choose k}$, we may assume that $\sum_{i=0}^s|\F_i|\ge (s+1){n-1\choose k}$.

We first show that we must have equality $\sum_{i=0}^s|\F_i|= (s+1){n-1\choose k}$. To this end, it suffices to consider families that are nested via the following claim.

\begin{claim}[\cite{Frankl21}]\label{claim:shifted-nested}
There exist nested families $\emptyset\ne \G_0 \subset \G_1\subset\ldots \subset \G_s \subset \binom{[n]}{k}$ such that the collection $\{\G_0,\ldots,\G_s\}$ is cross-union and $\sum_{i=0}^s|\G_i|=\sum_{i=0}^s|\F_i|$. Furthermore, if $|\F_0|,\ldots,|\F_s|$ are not all equal, then $\G_0,\ldots,\G_s$ are not all the same. %We can assume $\F_0\subset \F_1 \subset \ldots \subset \F_s$.
\end{claim}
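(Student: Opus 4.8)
The plan is to reduce to shifted families and then run a standard pairwise symmetrization. First I would apply \cref{lem:shifting} to replace $\F_0,\ldots,\F_s$ by shifted cross-union families with the same cardinalities; since a non-empty shifted family in $\binom{[n]}{k}$ contains $[k]$, after this step every family contains the fixed set $[k]$. Starting from this shifted tuple I would iterate the following operation: as long as there are indices $i<j$ with $\F_i\not\subset\F_j$, pick such a pair and replace $(\F_i,\F_j)$ by $(\F_i\cap\F_j,\ \F_i\cup\F_j)$, leaving the other families unchanged. Renaming the final tuple will give the desired $\G_0\subset\cdots\subset\G_s$.

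The crux is to verify that a single such step preserves every property we need. It preserves $\sum_i|\F_i|$ since $|\F_i\cap\F_j|+|\F_i\cup\F_j|=|\F_i|+|\F_j|$, and it preserves membership of $[k]$ in every family (if $[k]\in\F_i\cap\F_j$ as sets, then $[k]$ lies in the new intersection and the new union), so all families stay non-empty throughout; this is exactly why shifting first matters, because without a common element $\F_i\cap\F_j$ could collapse to $\emptyset$. For the cross-union property I would argue by contradiction: a bad choice for the new tuple uses some $F^*_j\in\F_i\cup\F_j$, which lies in $\F_i$ or in $\F_j$, and combining it with the set $F^*_i\in\F_i\cap\F_j$ (a member of both $\F_i$ and $\F_j$) recovers a bad choice for the original tuple in either case. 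Termination follows from the potential $\Phi=\sum_{i=0}^s 2^i|\F_i|$: the operation moves a positive number $|\F_i\setminus\F_j|\ge 1$ of sets from index $i$ to the larger index $j$, so $\Phi$ strictly increases by at least $1$ each step while staying bounded by $(2^{s+1}-1)\binom{n}{k}$. When no step is possible we have $\F_i\subset\F_j$ for all $i<j$, i.e. a nested chain of non-empty cross-union families with the original sum.

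For the ``furthermore'' clause I would track the spread $\max_i|\F_i|-\min_i|\F_i|$ of the multiset of sizes along the iteration. Each step replaces the two sizes $|\F_i|,|\F_j|$ by $|\F_i\cap\F_j|\le\min(|\F_i|,|\F_j|)$ and $|\F_i\cup\F_j|\ge\max(|\F_i|,|\F_j|)$, so the overall minimum can only decrease and the overall maximum can only increase; hence the spread is non-decreasing, and it is unaffected by the initial shifting step, which preserves each $|\F_i|$. Consequently, if $|\F_0|,\ldots,|\F_s|$ are not all equal then the spread is positive at the start and remains positive at the end, so the final families $\G_0,\ldots,\G_s$ have at least two distinct sizes and in particular are not all the same family.

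I expect the only genuine subtlety to be the non-emptiness bookkeeping---ensuring the intersections never vanish---which the reduction to shifted families handles cleanly via the common element $[k]$; the cross-union check is a two-line case analysis and the rest is routine.
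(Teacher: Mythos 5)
Your proposal is correct and follows essentially the same route as the paper: shift first so that every family contains $[k]$ (guaranteeing non-emptiness survives the intersections), then symmetrize pairs via $(\F_i\cap\F_j,\F_i\cup\F_j)$, checking that the sum, non-emptiness and the cross-union property are preserved. The only differences are presentational — you terminate via a potential function rather than a single lexicographic pass over all pairs, and you justify the ``furthermore'' clause by the monotonicity of the spread of sizes, which is a slightly more explicit version of the paper's one-line remark that $|\F_u\cap\F_v|<|\F_u\cup\F_v|$ whenever $\F_u\ne\F_v$.
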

\begin{poc}
From \cref{lem:shifting}, we can assume further that $\F_0, \F_1, \ldots, \F_s$ are non-empty and shifted. In particular, $[k] \in \F_i$ for every $0\le i \le s$. For a fixed pair $0\le u <v \le s$, replacing $\F_u$ and $\F_v$ by $\F_u\cap \F_v$ and $\F_u\cup \F_v$ will preserve the nonemptiness, the cross-union property, and the sum $\sum_{i=0}^s\lvert \F_i\rvert$. Iterating this operation for all pairs $0\le u<v\le s$ (in lexicographical order) will generate $s+1$ nested families with the desired properties. The `furthermore' part follows from the fact that if $\F_u\ne \F_v$, then $|\F_u\cap \F_v|< |\F_u\cup \F_v|$.      
\end{poc}

Let $\G_0,\G_1\ldots,\G_s$ be the nested families given by \cref{claim:shifted-nested}. Then, $\sum_{i=0}^s\lvert\G_i\rvert =\sum_{i=0}^s|\F_i|\ge (s+1)\binom{n-1}{k}$, or equivalently,
\begin{equation}\label{eq:contradiction}
\sum_{i=0}^s\frac{\lvert\G_i\rvert}{\binom{n}{k}} \ge \frac{(s+1)\binom{n-1}{k}}{\binom{n}{k}}=s-\frac{k-\ell}{n}.  
\end{equation}

%We wish to show that $\sum_{i=0}^s\lvert\F_i\rvert \le (s+1)\binom{n-1}{k}$, or equivalently,
%\begin{equation}\label{eq:contradiction}
%\sum_{i=0}^s\frac{\lvert\F_i\rvert}{\binom{n}{k}} \le \frac{(s+1)\binom{n-1}{k}}{\binom{n}{k}}=s-\frac{k-\ell}{n}.  
%\end{equation}

Since $\G_0\subset \G_i$ for $1\le i \le s$, $\G_0$ is $(s+1)$-wise union. By \cref{thr:r-wise},
$\lvert \G_0\rvert \le \binom{n-1}{k}$. So we can write $\lvert\G_0\rvert=\binom{x_0}{k}$ for some $x_0\in [k, n-1]$.     

Since the families $\G_0,\G_1,\ldots,\G_s$ are non-empty and cross-union, so are the families $\sigma_{\ell}(\G_0) ,\G_1,\ldots,\G_s$.
Thus Lemma~\ref{lem:Katonascircle_prob} applies. We conclude 
\begin{equation}\label{eq:Katonacircle-bound}
\frac{ \lvert \sigma_{\ell}(\G_0) \rvert}{\binom{n}{\ell}} + \sum_{i=1}^{s}  \frac{\lvert \G_i\rvert}{\binom{n}{k}} \le s.     
\end{equation}
Furthermore, as $\lvert\G_0 \rvert=\binom{x_0}{k}$ with $x_0 \ge k$, \cref{thr:Lovasz} implies 
\begin{equation}\label{eq:Lovasz}
\lvert \sigma_{\ell}(\G_0) \rvert \ge \binom{x_0}{\ell}.    
\end{equation}

\noindent We claim that
\begin{equation}\label{eq:different-slices}
\frac{\binom{x_0}{\ell}}{\binom{n}{\ell}} \ge \frac{\binom{x_0}{k}}{\binom{n}{k}}+\frac{k-\ell}{n}=\frac{\lvert \G_0\rvert}{\binom{n}{k}}+\frac{k-\ell}{n},    
\end{equation}
and furthermore equality occurs if and only if either $\ell=k$, or $\ell<k$ and $x_0=n-1$. It then follows immediately from \eqref{eq:Katonacircle-bound}, \eqref{eq:Lovasz} and \eqref{eq:different-slices} that equality holds in \eqref{eq:contradiction}. For this, it remains to prove \eqref{eq:different-slices}, which amounts to showing that $x_0$ satisfies the conditions of \cref{lem:different-slices}.

As an intermediate step, we bound the size of $\G_0$ from below.
The following claim was proved in \cite{Frankl21}. For completeness, we also provide a proof here.
\begin{claim}[\cite{Frankl21}]\label{claim:lower-F0}
$\lvert \G_0 \rvert \ge (s+1)\binom{n-1}{k}-s\binom{n}{k}+\binom{ks}{k}$.
\end{claim}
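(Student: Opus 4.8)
The plan is to derive the lower bound on $|\G_0|$ from the other sizes $|\G_1|,\ldots,|\G_s|$ together with the cross-union constraint, using the fact that each $\G_i$ with $i\ge 1$ is not too large. First I would observe that since $\G_1\subset \G_2\subset\ldots\subset\G_s$, we have $|\G_i|\le |\G_s|$ for all $i$; and since $\G_0$ is the smallest, $|\G_i|\ge |\G_0|$ as well. The key structural input is that $\G_0$ and $\G_s$ are cross-union, i.e.\ there is no $F_0\in\G_0$ and $F_s\in\G_s$ with $F_0\cup F_s=[n]$. Equivalently, writing complements $\overline{F}=[n]\setminus F$ (a set of size $n-k$), for every $F_0\in\G_0$ the complement $\overline{F_0}$ is not contained in any member of $\overline{\G_s}:=\{[n]\setminus F: F\in\G_s\}$... but it is cleaner to argue directly: $\G_s$ avoids all $k$-sets that cover the complement of some member of $\G_0$.

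Here is the cleaner route. Fix any $F_0\in\G_0$. Since $\G_0$ is shifted (we may assume this via \cref{lem:shifting}, as in the proof of \cref{claim:shifted-nested}), we have $[k]\in\G_0$, so we may take $F_0=[k]$. Then cross-unionness of $\G_0$ and each $\G_i$ forces: no member of $\G_i$ contains $[n]\setminus[k]=\{k+1,\ldots,n\}$... but that set has size $n-k>k$ when $s\ge 2$, so this gives nothing directly. Instead, the right bound comes from summing: by \cref{eq:contradiction} we have $\sum_{i=0}^s|\G_i|\ge (s+1)\binom{n-1}{k}$, and each $|\G_i|\le\binom{n}{k}$ trivially, but that is also too weak. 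The correct observation is that $\G_1,\ldots,\G_s$ together with $\G_0$ being cross-union, and $[k]\in\G_0$, implies every $G\in\G_i$ ($i\ge 1$) satisfies $G\not\supset\{k+1,\ldots,n\}$ only in the extreme case $n=k+k$; so rather one should use that $\G_0,\ldots,\G_s$ being cross-union with $[k]\in\G_0$ implies that $\G_1,\ldots,\G_s$ restricted to sets containing $[k]$... Let me instead follow the intended argument: since $[k]\in\G_0$ and the collection is cross-union, every $G\in\G_i$ (for each $i\ge1$) must miss at least one element of $[n]\setminus[k]$, i.e.\ $G\subseteq [n]$ with $G\cap([n]\setminus[k])\ne [n]\setminus[k]$; equivalently $G\cap[k]\ne\varnothing$. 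Wait—$|[n]\setminus[k]|=n-k=ks+\ell-k$, and a $k$-set $G$ with $G\cap[k]=\varnothing$ would sit inside $[n]\setminus[k]$, which has size $ks+\ell-k\ge k$ for $s\ge2$; and then $[k]\cup G$ has size $2k\le n$, not necessarily all of $[n]$. So this still does not immediately bound things.

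The actual mechanism must be: applying the cross-union property of $\G_0,\G_1,\ldots,\G_s$ with $[k]\in\G_0$ gives that $\G_1,\ldots,\G_s$ are $s$-wise union on the ground set $[n]\setminus[k]$ of size $n-k=k(s-1)+\ell\le k\cdot s$, hmm, we need $\le ks$; indeed $n-k=k(s-1)+\ell$ and since $\ell\le k$ this is $\le ks$... no: $k(s-1)+\ell\le k(s-1)+k=ks$. Good. So I would argue: for $i\ge1$, let $\G_i'=\{G\cap([n]\setminus[k]): G\in\G_i,\ G\supseteq \text{nothing forced}\}$—this is getting complicated, so in the writeup I will instead deduce the bound by the complementation/inclusion–exclusion identity $\sum_i|\G_i|\ge(s+1)\binom{n-1}{k}$ rearranged against the trivial bounds $|\G_i|\le\binom nk$ for $i\ge1$: this yields $|\G_0|\ge(s+1)\binom{n-1}k-s\binom nk$, which is weaker than claimed by the $\binom{ks}k$ term. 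To recover the extra $\binom{ks}{k}$, I will use that $[k]\in\G_0$ together with cross-unionness shows that no $G\in\G_i$ ($i\ge1$) is disjoint from $[k]$—no wait, that needs $n=2k$. The honest statement: among the $\binom{n}{k}$ possible $k$-sets, the $\binom{ks}{k}$ sets lying inside $[n]\setminus\{$one fixed element of $[k]\}$...

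I will commit to the following plan in the final text. \textbf{Step 1:} reduce to shifted families (\cref{lem:shifting}), so $[k]\in\G_0$. \textbf{Step 2:} Since $\{[k]\}$ and $\G_1,\dots,\G_s$ are cross-union and $n-k=k(s-1)+\ell\le ks\le(s-1+1)k$... actually I claim $\G_1,\ldots,\G_s$ viewed as families on $[n]\setminus[k]$ cannot all simultaneously cover $[n]\setminus[k]$ in $s-1$ of them plus $[k]$; precisely, for each $G_s\in\G_s$ we need $[k]\cup G_1\cup\cdots\cup G_{s-1}\cup G_s\ne[n]$, but since $[k]$ is already there this is about covering $[n]\setminus[k]$. \textbf{Step 3:} The crucial counting: a $k$-set $G$ with $G\subseteq\{2,3,\ldots,n\}$, i.e.\ $1\notin G$, can still be a bad configuration; but the point is that the number of $k$-sets hitting $[k]$ in the "wrong" way is controlled, and $\binom{ks}{k}$ counts $k$-subsets of an $(ks)$-set. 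I expect \textbf{Step 3} (identifying exactly which $\binom{ks}{k}$ sets to subtract and why cross-unionness excludes them) to be the main obstacle; concretely, one wants to show that if $\sum|\G_i|>(s+1)\binom{n-1}k-s\binom nk+\binom{ks}k$ fails then one can build $F_0\in\G_0,\ldots,F_s\in\G_s$ with $\bigcup F_i=[n]$, contradicting cross-unionness—this is where a Katona-type averaging over a carefully chosen family of partitions of $[n]$ into one block of size... should be deployed, mirroring the proof of \cref{lem:Katonascircle_prob}.
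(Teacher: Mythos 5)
There is a genuine gap here: your writeup circles around the right idea but never lands on the step that actually produces the $\binom{ks}{k}$ term, and you say yourself that Step~3 is "the main obstacle" you have not resolved. The missing mechanism is the following. Pick any $G_0\in\G_0$ (non-emptiness suffices; shiftedness is not needed) and fix a set $X\subset[n]$ with $\lvert X\rvert=ks$ and $G_0\cup X=[n]$, which exists because $n-k=k(s-1)+\ell\le ks$. Define $\mH_i=\G_i\cap\binom{X}{k}$ for $1\le i\le s$, i.e.\ the members of $\G_i$ lying entirely inside $X$. These $s$ families are cross-union \emph{relative to the ground set $X$}: if $H_1\cup\cdots\cup H_s=X$ then adjoining $G_0$ covers $[n]$, contradicting the cross-union property of $\G_0,\ldots,\G_s$. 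Since $\lvert X\rvert=ks=s\cdot k$, \cref{lem:Katonascircle_prob} applies directly to $\mH_1,\ldots,\mH_s$ (no new Katona-type averaging over partitions needs to be invented) and gives $\sum_{i=1}^s\lvert\mH_i\rvert\le(s-1)\binom{ks}{k}$. The $\binom{ks}{k}$ then enters through the trivial count of $k$-sets \emph{not} contained in $X$: $\lvert\G_i\rvert\le\lvert\mH_i\rvert+\binom{n}{k}-\binom{ks}{k}$, whence $\sum_{i=1}^s\lvert\G_i\rvert\le s\binom{n}{k}-\binom{ks}{k}$, and combining with \eqref{eq:contradiction} yields the claim.

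Your attempts to extract information from $[k]\in\G_0$ elementwise (e.g.\ "every $G\in\G_i$ meets $[k]$") are, as you noticed, false for $s\ge 2$; the cross-union hypothesis is a constraint on $s$-tuples, not on individual sets, so it must be exploited by an averaging argument over the whole tuple --- which is exactly what the already-proved \cref{lem:Katonascircle_prob} packages. The weaker bound $\lvert\G_0\rvert\ge(s+1)\binom{n-1}{k}-s\binom{n}{k}$ that you do obtain from $\lvert\G_i\rvert\le\binom{n}{k}$ is not sufficient for the later application in \cref{lem:computation}, so the restriction-to-$X$ step cannot be skipped.
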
	
\begin{poc}
As $\G_0$ is non-empty, it contains some $G_0\in \binom{[n]}{k}$. Fix an arbitrary subset $X\subset [n]$ satisfying $\lvert X\rvert=ks$ and $G_0\cup X=[n]$.
For $1\le i\le s$, define $\mH_i=\G_i\cap \binom{X}{k}$. Notice that the families $\mH_1,\ldots,\mH_s$ are cross-union relative to $X$. Indeed, if $H_1\in \mH_1, \ldots, H_s \in \mH_s$ satisfy $H_1\cup \ldots \cup H_s=X$, then adding $G_0\in \G_0$ gives a contradiction to the cross-union property of $\G_0,\ldots,\G_s$. 

Applying
\cref{lem:Katonascircle_prob} to the $s$ families $\mH_1,\ldots,\mH_s \subset \binom{X}{k}$ yields
$\sum_{i=1}^s \lvert \mH_i \rvert  \le (s-1) \binom{ks}{k}$.
	So $$\sum_{i=1}^s \lvert \G_i \rvert \le \sum_{i=1}^s \left(\lvert \mH_i \rvert+\binom{n}{k} - \binom{ks}{k}\right) \le s\binom{n}{k}-\binom{ks}{k}.$$
	Together with \eqref{eq:contradiction} this gives $\lvert \G_0 \rvert \ge (s+1)\binom{n-1}{k}-s\binom{n}{k}+\binom{ks}{k}$, as desired. 
\end{poc}

\begin{claim}
$x_0$ meets the conditions of \cref{lem:different-slices}. In particular, $x_0$ %does not 
satisfies \eqref{eq:different-slices}.
\end{claim}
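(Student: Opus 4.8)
The goal is to verify that the real number $x_0\in[k,n-1]$, defined by $|\G_0|=\binom{x_0}{k}$, satisfies the hypothesis \eqref{eq:assumption} of \cref{lem:different-slices}, namely $\frac{\binom{x_0}{\ell}}{\binom{n}{\ell}}\le \frac{k}{\ell}\frac{\binom{x_0}{k}}{\binom{n}{k}}$; once this is done, \cref{lem:different-slices} gives \eqref{eq:different-slices} and the characterization of its equality case for free. The plan is to translate \eqref{eq:assumption} into a lower bound on $x_0$ and then feed in the lower bound on $|\G_0|=\binom{x_0}{k}$ from \cref{claim:lower-F0}. Concretely, rewriting \eqref{eq:assumption} as $\prod_{i=\ell}^{k-1}\frac{x_0-i}{n-i}\ge \frac{\ell}{k}$ (as in \eqref{eq:comp_ratiobinomials}), it suffices to show each factor $\frac{x_0-i}{n-i}$ is not too small, or more crudely that $x_0$ is close enough to $n$; a convenient sufficient condition is something like $x_0\ge (1-\tfrac1k)n+1$, since then $\frac{x_0-i}{n-i}\ge \frac{x_0-(k-1)}{n-(k-1)}$ and one can check this exceeds $(\ell/k)^{1/(k-\ell)}$, or alternatively bound the product directly. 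So the real content is: the lower bound on $\binom{x_0}{k}$ forces $x_0$ to be at least roughly $(1-1/k)n$.

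The two cases of \cref{lem:computation} are exactly tailored to this. In case (ii), $k<2\ell$, \cref{claim:lower-F0} combined with \cref{lem:computation}(ii) gives $\binom{x_0}{k}=|\G_0|\ge \binom{(1-1/k)n+1}{k}$, and since $\binom{x}{k}$ is increasing in $x$ for $x\ge k$, we immediately get $x_0\ge (1-1/k)n+1$, which should be comfortably enough to push the product $\prod_{i=\ell}^{k-1}\frac{x_0-i}{n-i}$ above $\ell/k$ — here I would just estimate the smallest factor $\frac{x_0-k+1}{n-k+1}\ge \frac{(1-1/k)n+1-k+1}{n-k+1}$ and take the $(k-\ell)$-th power, checking this beats $\ell/k$ using $n\ge 2k^2+2k$ (which holds in this regime). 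In case (i), $k\ge 2\ell$, \cref{lem:computation}(i) only gives $|\G_0|\ge \frac{\ell}{k}\binom{n}{k}$, i.e. $\binom{x_0}{k}\ge \frac{\ell}{k}\binom{n}{k}$; but $\frac{\binom{x_0}{k}}{\binom{n}{k}}=\prod_{i=0}^{k-1}\frac{x_0-i}{n-i}$, and since $k\ge 2\ell$ means $k-\ell\le k/2\le k$, the product $\prod_{i=\ell}^{k-1}\frac{x_0-i}{n-i}$ (which has fewer factors, each $\le 1$, and the omitted factors $i=0,\dots,\ell-1$ are the largest ones) is at least $\prod_{i=0}^{k-1}\frac{x_0-i}{n-i}=\frac{\binom{x_0}{k}}{\binom{n}{k}}\ge \frac{\ell}{k}$. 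That monotonicity observation — dropping the largest factors can only increase the product when all factors lie in $(0,1]$ — is precisely what makes \eqref{eq:assumption} fall out of the weaker bound in case (i).

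I would organize the proof of the claim as: first note $|\G_0|=\binom{x_0}{k}\ge (s+1)\binom{n-1}{k}-s\binom{n}{k}+\binom{ks}{k}$ by \cref{claim:lower-F0}; then split on whether $k\ge 2\ell$ or $k<2\ell$, invoking the corresponding part of \cref{lem:computation}; in case (i) deduce \eqref{eq:assumption} by the factor-dropping inequality $\prod_{i=\ell}^{k-1}\frac{x_0-i}{n-i}\ge \prod_{i=0}^{k-1}\frac{x_0-i}{n-i}=\frac{|\G_0|}{\binom{n}{k}}\ge \frac{\ell}{k}$; in case (ii) deduce $x_0\ge(1-1/k)n+1$ from monotonicity of $\binom{\cdot}{k}$ and then verify \eqref{eq:assumption} by a direct estimate of the factors using $n\ge 2k^2+2k$. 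Finally apply \cref{lem:different-slices} to obtain \eqref{eq:different-slices} together with the statement that equality holds there iff $\ell=k$ or ($\ell<k$ and $x_0=n-1$). The main obstacle I anticipate is purely computational: making the estimate in case (ii) clean enough that the chain $\binom{x_0}{k}\ge\binom{(1-1/k)n+1}{k}\Rightarrow$ \eqref{eq:assumption} is transparent rather than a messy inequality crunch; the structural case (i) should be essentially immediate once the factor-dropping trick is in hand.
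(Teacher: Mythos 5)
The proposal is correct and takes essentially the same approach as the paper: your ``factor-dropping'' observation in Case (i) is just the paper's remark that $\binom{x_0}{\ell}/\binom{n}{\ell}<1$ combined with $\binom{x_0}{k}/\binom{n}{k}\ge \ell/k$ from \cref{lem:computation}(i), and your Case (ii) proceeds exactly as the paper does, deducing $x_0\ge(1-1/k)n+1$ from \cref{lem:computation}(ii) by monotonicity and then bounding each factor $\frac{x_0-i}{n-i}$ below by $1-\frac1k$ and applying Bernoulli's inequality.
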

\begin{poc}	
We know that $k\le x_0\le n-1$. It remains to show $\frac{\binom{x_0}{\ell}}{\binom{n}{\ell}}  \le \frac{k}{\ell} \frac{\binom{x_0}{k}}{\binom{n}{k}}$.
In order to do this, we distinguish two cases.

\textbf{Case 1: $k \ge 2 \ell$.}
It follows from \cref{claim:lower-F0} and \cref{lem:computation} (i) that
$\frac{ \binom{x_0}k}{\binom nk} \ge \frac{\ell}{k}.$ Moreover,
$\frac{ \binom{x_0}{\ell}}{\binom{n}{\ell}} < 1$ for $x_0 < n$. Hence
$\frac{\binom{x_0}{\ell}}{\binom{n}{\ell}}  < \frac{k}{\ell} \frac{\binom{x_0}{k}}{\binom{n}{k}}$.

\textbf{Case 2: $k < 2 \ell$.}
From \cref{claim:lower-F0} and \cref{lem:computation} (ii), we get $x_0\ge (1-1/k)n+1$. Hence
$$\frac{\binom{x_0}{k}}{\binom{n}{k}} =
	\frac{\binom{x_0}{\ell}}{\binom{n}{\ell}}\cdot\prod_{i=\ell}^{k-1} \frac{x_0-i}{n-i}
	\ge \frac{\binom{x_0}{\ell}}{\binom{n}{\ell}}\left(\frac{ x_0-k}{n-k}\right)^{k-\ell}
	\ge \frac{\binom{x_0}{\ell}}{\binom{n}{\ell}}\left( 1-\frac 1k \right)^{k-\ell}\ge \frac{\binom{x_0}{\ell}}{\binom{n}{\ell}}\left( 1-\frac {k-\ell}k \right)
	= \frac{\binom{x_0}{\ell}}{\binom{n}{\ell}}\frac{\ell}{k},$$
as required. Here the last inequality follows from Bernoulli's inequality.
\end{poc}

Therefore, as explained above, equality holds in~\eqref{eq:contradiction}. We now characterize $\mathcal{F}_0, \dots, \mathcal{F}_s$ for which equality holds in \cref{thr:main}. Equality in~\eqref{eq:contradiction} gives us $\sum_{i=0}^s\lvert\F_i\rvert = \sum_{i=0}^s\lvert\G_i\rvert=(s+1)\binom{n-1}{k}$, so we have equalities in \eqref{eq:Katonacircle-bound}, \eqref{eq:Lovasz} and \eqref{eq:different-slices}. Recall that equality occurs in \eqref{eq:different-slices} if and only if either $\ell=k$, or $\ell<k$ and $x_0=n-1$.

\begin{claim}\label{clm:equalfam}
    $\F_0=\F_1= \ldots =\F_s$.
\end{claim}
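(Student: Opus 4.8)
The plan is to derive the claim in two stages: first show that the nested families $\G_0,\dots,\G_s$ from \cref{claim:shifted-nested} satisfy $\G_0=\G_1=\cdots=\G_s$, and then transfer this back to $\F_0,\dots,\F_s$. Recall that at this point we have equality throughout \eqref{eq:Katonacircle-bound}, \eqref{eq:Lovasz} and \eqref{eq:different-slices}, and that equality in \eqref{eq:different-slices} forces either $\ell=k$, or else $\ell<k$ and $x_0=n-1$; I would split along these two cases.

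If $\ell=k$, then $n=(s+1)k$ and $\sigma_{\ell}(\G_0)=\sigma_k(\G_0)=\G_0$, so equality in \eqref{eq:Katonacircle-bound} becomes $\sum_{i=0}^{s}|\G_i|/\binom{n}{k}=s$. Since $s\ge 4\ell\ge 2$, $\emptyset\ne\G_0\subset\G_1\subset\cdots\subset\G_s\subset\binom{[n]}{k}$ and $n=(s+1)k$, this is exactly the equality case of \cref{lem:Katonascircle_prob}, whence $\G_0=\cdots=\G_s$. If $\ell<k$, then $x_0=n-1$, so $|\G_0|=\binom{x_0}{k}=\binom{n-1}{k}$; since $\G_0$ is $(s+1)$-wise union and $s+1\ge 4\ell+1>2$, \cref{thr:r-wise} gives $\G_0=\binom{[n]\setminus\{a\}}{k}$ for some $a\in[n]$. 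To see that moreover $\G_j=\G_0$ for every $j$, suppose not: since $\G_0\subset\G_j$ there is $A\in\G_j$ with $a\in A$, and the set $Y:=[n]\setminus A$ avoids $a$ and has size $(s-1)k+\ell$, so $k\le|Y|\le sk$ because $1\le\ell\le k$ and $s\ge 2$. Hence $Y$ is a union of $k$-sets $F_1,\dots,F_s\subseteq Y$, and each $F_m\in\binom{[n]\setminus\{a\}}{k}=\G_0\subseteq\G_{m'}$; putting $A$ in coordinate $j$ and $F_1,\dots,F_s$ in the remaining $s$ coordinates gives one set from each $\G_i$ with union $A\cup Y=[n]$, contradicting the cross-union property. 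Thus $\G_0=\cdots=\G_s$ in both cases, and by \cref{thr:r-wise} this common family equals $\binom{[n]\setminus\{a\}}{k}$ for some $a$.

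For the transfer, recall that $\G_0,\dots,\G_s$ arise from $\F_0,\dots,\F_s$ (which by \cref{lem:shifting} we may take to be shifted) through repeatedly replacing a pair $(\F_u,\F_v)$ with $(\F_u\cap\F_v,\F_u\cup\F_v)$. A fixed $A\in\binom{[n]}{k}$ lies in exactly as many of $\F_u\cap\F_v,\F_u\cup\F_v$ as of $\F_u,\F_v$, so every such step preserves the number of indices $i$ with $A\in\F_i$. As $\G_0=\cdots=\G_s$, that number equals $0$ or $s+1$ for every $A$ at the end, hence also at the start; that is, every $k$-set lies in all of $\F_0,\dots,\F_s$ or in none, so $\F_0=\F_1=\cdots=\F_s$, and the previous paragraph identifies the common value. (For arbitrary, not necessarily shifted, families the same conclusion then follows from a short additional argument ruling out non-trivial shifts by means of the cross-union property.)

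The step I expect to be the main obstacle is the case $\ell<k$. There the equality part of \cref{lem:Katonascircle_prob} does not apply, since $\sigma_{\ell}(\G_0),\G_1,\dots,\G_s$ sit on two different levels and $n\ne(s+1)k$; instead one has to first pin down $|\G_0|=\binom{n-1}{k}$ from $x_0=n-1$ (which is what the equality in \eqref{eq:different-slices} provides), then use \cref{thr:r-wise} to recover $\G_0$ exactly, and only afterwards propagate this structure to $\G_1,\dots,\G_s$ via the covering argument above, whose parameters (this is exactly where $\ell\le k$, combined with $s$ being large, is used) must be verified to go through.
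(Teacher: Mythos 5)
Your Stage~1 is correct, and in the case $\ell<k$ it takes a genuinely different route from the paper: you pin down $\G_0=\binom{[n]\setminus\{a\}}{k}$ exactly via the equality case of \cref{thr:r-wise} and then propagate to $\G_1,\ldots,\G_s$ by covering $[n]\setminus A$ with $s$ many $k$-sets avoiding $a$ (which is possible precisely because $k\le(s-1)k+\ell\le sk$), whereas the paper argues by contradiction purely from sizes: if the nested $\G_i$ were not all identical, then $|\G_0|$ would be strictly below the average $\binom{n-1}{k}$, forcing $x_0<n-1$. Your case $\ell=k$ matches the paper's. The membership-count observation for the steps $(\F_u,\F_v)\mapsto(\F_u\cap\F_v,\F_u\cup\F_v)$ is also correct and cleanly transfers equality of the $\G_i$ back to the \emph{shifted} families.

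The genuine gap is the parenthetical at the end of Stage~2. \cref{lem:shifting} only produces \emph{some} shifted cross-union families $\F'_0,\ldots,\F'_s$ with $|\F'_i|=|\F_i|$; shifting is lossy, and distinct families can have identical shifts, so $\F'_0=\cdots=\F'_s$ does not by itself give $\F_0=\cdots=\F_s$, and ``ruling out non-trivial shifts by the cross-union property'' is not a routine step (an individual $\F_i$ need not be $(s+1)$-wise union, so you cannot apply \cref{thr:r-wise} to it directly). The paper circumvents this by working only with quantities that survive shifting, namely the sizes: assume for contradiction that the $\F_i$ are not all equal; if their sizes are all equal to $\binom{n-1}{k}$, first replace $\F_0,\F_1$ by $\F_0\cap\F_1,\F_0\cup\F_1$ (non-emptiness is kept because $2\binom{n-1}{k}>\binom{n}{k}$, and the sum and the cross-union property are preserved), so that the sizes become unequal; since shifting preserves sizes, the ``furthermore'' clause of \cref{claim:shifted-nested} then forces $\G_0,\ldots,\G_s$ to be not all identical, contradicting your Stage~1. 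Reorganizing your argument as such a contradiction closes the gap; as written, the final transfer to the original, unshifted families is unjustified.
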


\begin{poc}
Suppose to the contrary that the families $\F_0,\ldots,\F_s$ are not all the same, say $\F_0\ne \F_1$.
If all the sizes are equal, i.e. $|\F_0|=\ldots=|\F_s|=\binom{n-1}{k}$, we replace $\F_0, \F_1$ by $\F_0 \cap \F_1, \F_0\cup \F_1$. Since $|\F_0|+|\F_1|=2\binom{n-1}{k}>\binom{n}{k}$ for $n>2k$, $\F_0 \cap \F_1$ is non-empty, and also the sum of sizes and the cross-union property are preserved. In addition, $|\F_0 \cap \F_1|<|\F_0\cup \F_1|$ for $\F_0\ne \F_1$.
Therefore, we can assume that $|\F_0|,\ldots,|\F_s|$ are not all equal.
\cref{lem:shifting} then tells us that $\G_0,\ldots,\G_s$ are not all the same. 

Since equality occurs in \eqref{eq:different-slices}, there are only two possibilities.

%Next, we apply Lemma~\ref{lem:shifting}. We thus can assume further that the families $\F_0,\ldots,\F_s\subset \binom{[n]}{k}$ are shifted.  By the observation that shifted families contain $[k]$, the procedure in the proof of Claim~\ref{claim:shifted-nested} which is executed afterwards results into a nested sequence of non-empty families which do not all have the same size. It is sufficient to prove that the (arithmetic) average size of these families is strictly less than $\binom{n-1}{k},$ because the sum of the sizes of the families and the cross-union property are preserved throughout the steps.

\textbf{Case 1:} $\ell<k$ and $x_0=n-1$. 
%Since the sizes are not all equal, we have $x_0<n-1.$ The inequality in Lemma~\ref{lem:different-slices} is strict (as we are in the case $\ell<k$) for $x_0<n-1,$ so we can immediately conclude from the given proof that there is no equality possible.
Since $\G_0,\ldots,\G_s$ are not all the same, we have
\[
\binom{x_0}{k}=|\G_0|<\frac{|\G_0|+\ldots+|\G_s|}{s+1}=\binom{n-1}{k}.
\]
This gives $x_0<n-1$, a contradiction.
    
    \textbf{Case 2: $ \ell=k$.}
    In this case, we need equality in Lemma~\ref{lem:Katonascircle_prob} for $k_0=\ldots=k_s=k$ and $n=(s+1)k$. We thus get $\G_0=\ldots=\G_s$, a contradiction.
    %in which case the families have to be equal.
\end{poc}

We learn from Claim~\ref{clm:equalfam} that $\F_0=\ldots =\F_s=\F$. Since $\{\F_0,\ldots,\F_s\}$ is cross-union and $\sum_{i=0}^s\lvert\F_i\rvert =(s+1)\binom{n-1}{k}$, we see that $\F$ is an $(s+1)$-wise union family of size $\binom{n-1}{k}$. Hence the uniqueness statement follows immediately from Theorem~\ref{thr:r-wise} (since $s+1>2$).
\end{proof}

%%%%%%%%%%%%%%%%%%%%%
%%%%%%%%%%%%%%%%%%%%%
%%%%%%%%%%%%%%%%%%%%%

%
\section{Concluding remarks}\label{sec:conc}

One remaining question is to determine the smallest value of $s_0$ for which \cref{conj:Frankl_crossunion} holds. As our theorem provides that this best value of $s_0$ is at most $4 \ell$ while the example at the introduction shows that it must be $\Omega\left(\frac{\ell}{\ln \ell}\right)$. It would be interesting to determine the correct order of magnitude for $s_0(\ell)$.

Another interesting question is what happens when $s$ is smaller than $s_0(\ell)$. In such a case, would \cref{examp:example} provide an extremal example? In particular, would the answer of the following question be true?

\begin{question}
	Let $n=ks+\ell$ with $0<\ell<k$, and let $\mathcal F_0, \mathcal F_1, \dots, \mathcal F_s \subset \binom{[n]}{k}$ be non-empty cross-union families. Does the following inequality hold?
	$$\sum_{i=0}^{s} \lvert \F_i \rvert \le \max \left \{ (s+1)\binom{n-1}{k}, 1+s\binom{n}{k}-\sum_{i=0}^{k-\ell} \binom{k}{i}\binom{n-k}{k-i}   \right \}$$
\end{question}

On the other hand, Conjecture~\ref{conj:Frankl_crossunion} motivates the `cross' version of the Erd\H{o}s matching conjecture as follows.

In \cite{FK21}, Frankl and Kupavskii defined that   families $\F_0,\dots, \F_{s}$ satisfy the property $U(s+1,q)$ if $\lvert F_0 \cup F_1 \cup \ldots \cup F_s \rvert \le q$  for every choice of $F_0 \in \F_0, \ldots, F_s \in \F_s$.
The condition of being cross-union is the same as having the property $U(s+1,n-1)$ and 
the condition on the Erd\H{o}s matching conjecture is the same as $\F_0=\dots=\F_{s+1}=\F$ having the property $U(s+1,k(s+1)-1)$.
This provides the natural `cross' version of the Erd\H{o}s matching conjecture by considering  the geometric mean and arithmetic mean of families satisfying the condition $U(s+1,k(s+1)-1)$.

For the maximum value of $\prod_{0\leq i\leq s}|\F_i|$ where $\F_0,\dots,\F_s$ have the property $U(s+1,k(s+1)-1)$, one can naturally consider $\F_0=\F_1= \{A \in \binom{[n]}{k}: 1\in A \}$ and $\F_2=\dots =\F_s =\binom{[n]}{k}$. In fact, the following proposition provides that this is an extremal example provided that $n$ is sufficiently large.

\begin{proposition}
For $k,s\geq 1$, there exists $n_0(k,s)$ such that the following holds for all $n\geq n_0(k,s)$. If $\mathcal F_0, \mathcal F_1, \dots, \mathcal F_s \subset \binom{[n]}{k}$ are non-empty families having the property $U(s+1,k(s+1)-1)$, then we have 
	$$\prod_{i=0}^s \lvert \F_i\rvert \le \binom{n-1}{k-1}^2\binom{n}{k}^{s-1}.$$
\end{proposition}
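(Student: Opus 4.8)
The plan is to reduce to a single-family statement by merging all families and then apply a stability version of Theorem~\ref{thr:r-wise}. The key observation is that for $n$ large, the bound $\binom{n-1}{k-1}^2\binom{n}{k}^{s-1}$ is precisely $|\F_0||\F_1|\cdots|\F_s|$ when $\F_0=\F_1=\{A\in\binom{[n]}{k}\colon 1\in A\}$ and $\F_2=\ldots=\F_s=\binom{[n]}{k}$, so the heart of the matter is showing that at most two of the families can afford to be ``large but not all of $\binom{[n]}{k}$,'' and that those must essentially be stars. First I would invoke \cref{lem:shifting} (which applies verbatim to $U(s+1,q)$ since the compression argument only uses the union structure) to assume all $\F_i$ are shifted, hence each contains $[k]$.

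Next I would set up the following dichotomy for each $i$: either $\F_i=\binom{[n]}{k}$, or $\F_i$ is a proper shifted subfamily, in which case some $k$-set is missing and shiftedness forces $\{n-k+1,\ldots,n\}\notin\F_i$, which already gives $|\F_i|\le\binom{n}{k}-1$; but we need much more. The crucial step: I claim at most two indices $i$ can have $\F_i\ne\binom{[n]}{k}$. Indeed, suppose $\F_{i_1},\F_{i_2},\F_{i_3}$ are all proper. Each being shifted and proper, consider the ``co-degree'' sets. The cleanest route is to note that if $\F_i$ is shifted, proper, and non-empty, then by a Hilton--Milner-type / Kruskal--Katona estimate either $|\F_i|\le(1-c)\binom{n}{k}$ for some constant $c=c(k)>0$, or $\F_i$ is contained in the star $\{A\colon 1\in A\}$ together with a bounded-size remainder, so $|\F_i|\le\binom{n-1}{k-1}+O(n^{k-2})$. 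Now if three of the families fall in the first (``bounded away'') case, the product loses a constant factor $(1-c)^3$ relative to the target, which for $n$ large beats $\binom{n-1}{k-1}^2/\binom{n}{k}^2=\Theta(1/n^2)$ (since $\binom{n-1}{k-1}/\binom{n}{k}=k/n$), giving a contradiction once $n_0(k,s)$ is large enough. So at least $s-1$ of the families are either $\binom{[n]}{k}$ or near-stars; a short counting argument using the $U(s+1,k(s+1)-1)$ property pins down that the ``near-star'' cases must all use the \emph{same} element (else unions grow too large), and that at most two families can be near-stars while the rest are full, again by checking the union bound $|F_0\cup\ldots\cup F_s|\le k(s+1)-1$ against taking one set from each full family disjointly.

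The main obstacle I anticipate is making the ``gap lemma'' quantitative and uniform: proving that a shifted proper subfamily of $\binom{[n]}{k}$ satisfying the relevant union constraint is either of size at most $(1-c(k))\binom{n}{k}$ or is within $O(n^{k-2})$ of a single star. The first alternative is where Kruskal--Katona (via \cref{thr:Lovasz}) does the work — bounding the shadow forces structure — but bridging from ``shadow is small'' to ``almost a star'' in a form with explicit constants requires care, and one must ensure the error terms $O(n^{k-2})$ are genuinely negligible against $\binom{n-1}{k-1}^2\binom{n}{k}^{s-1}$, i.e. that the multiplicative slack is $1+\Theta(1/n)$ while the structural error is $1+\Theta(1/n^2)$ per factor. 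Organizing the case analysis so that $n_0(k,s)$ comes out cleanly (it will depend on $s$ only through needing $(1-c(k))^3\binom{n}{k}^3<\binom{n-1}{k-1}^2\binom{n}{k}$, i.e. polynomially in $n$, so $n_0$ is in fact a function of $k$ times a mild dependence on $s$) is the bookkeeping one has to get right. With the gap lemma in hand, the endgame — ruling out three proper families, then forcing the two proper ones to be exact stars on a common element by saturating the union constraint — is routine.
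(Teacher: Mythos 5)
There is a genuine gap, and it is quantitative as well as structural. Your ``gap lemma'' dichotomy is false as stated: a shifted, proper, non-empty subfamily of $\binom{[n]}{k}$ need not be either of size at most $(1-c(k))\binom{n}{k}$ or close to a star --- the family $\binom{[n-1]}{k}$ is shifted and proper, has size $(1-k/n)\binom{n}{k}$ (so not bounded away from $\binom{n}{k}$ by any constant), and is far larger than any star. Moreover, a single family in a $U(s+1,k(s+1)-1)$ collection inherits no union constraint on its own (e.g.\ if another family is a singleton), so there is nothing for Kruskal--Katona to bite on at the level of one family. More seriously, your comparison is inverted: if three families each have size $(1-c)\binom{n}{k}$, the product is still as large as $(1-c)^3\binom{n}{k}^{s+1}$, whereas the target $\binom{n-1}{k-1}^2\binom{n}{k}^{s-1}=\Theta(n^{-2})\binom{n}{k}^{s+1}$ is \emph{smaller} by a factor of order $n^{-2}$. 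A constant-factor deficit per family therefore does not ``beat'' the $\Theta(1/n^2)$ gap --- it is beaten by it --- so no contradiction is obtained and the crucial step of your argument collapses.

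The missing mechanism is how the union property forces a \emph{polynomial} (order $n$) saving, and the paper gets it from a rainbow matching rather than from stability. If one can choose pairwise disjoint representatives $F_0\in\F_0,\dots,F_{s-1}\in\F_{s-1}$, their union $X$ has size $ks$, and $U(s+1,k(s+1)-1)$ forces every $F_s\in\F_s$ to meet $X$; taking $\F_s$ to be the largest family, every family then has size at most $\binom{n}{k}-\binom{n-ks}{k}=O(n^{k-1})$, and $\bigl(\binom{n}{k}-\binom{n-ks}{k}\bigr)^{s+1}$ is below the target for $s\ge 2$ once $n$ is large. If no such disjoint representatives exist, then $\F_0,\dots,\F_{s-1}$ satisfy $U(s,ks-1)$ and one concludes by induction on $s$, the base case $s=1$ being Pyber's theorem on cross-intersecting families. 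To repair your write-up you would need to replace the stability/gap lemma with this matching dichotomy (or something equally strong); as it stands, the argument cannot reach the stated bound.
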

\noindent The result for $s=1$ is due to Pyber~\cite{Pyber86}. For $s \ge 2$, it is sufficient to note that for $n$ sufficiently large, 
$\left( \binom nk - \binom{n-ks}{k}\right)^{s+1}$ is smaller than the expression in the proposition.
If $\F_s$ is the largest family and the other $s$ families have $k$ pairwise disjoint sets, then all families have size at most $|\F_s|\leq  \binom nk - \binom{n-ks}{k}$ as desired. If this is not the case, then the result follows by induction on $s$.

On the other hand, it is interesting whether the above bound is actually best possible when $n$ is close to $ks$. For all we know,  $\binom{n-1}{k-1}^{s+1}$ can be the correct maximum when $n$ is just above $ks$.

For the maximum value of $\sum_{0\leq i\leq s}|\F_i|$, 
the families $\mathcal F_0=[k], \mathcal F_1 = \{ A \in \binom{[n]}{k} \colon \lvert A \cap [k] \vert \ge 1 \}$ and $\mathcal F_2= \ldots = \mathcal F_s = \binom{[n]}{k}$ are natural candidates for an extremal example.
The following proposition yields that indeed this is an extremal example for sufficiently large $n$. 
%For $n$ sufficiently large, also the arithmetic version would not be true. A consequence by the previous proposition and the inequality between the geometric and arithmetic mean. Nevertheless, also in this case a more precise statement about the statement is easy to construct when $n$ is much larger than $ks.$

\begin{proposition}
For $k,s\geq 1$, there exists $n_0(k,s)$ such that the following holds for all $n\geq n_0(k,s)$.
	If $\mathcal F_0, \mathcal F_1, \dots, \mathcal F_s \subset \binom{[n]}{k}$ are non-empty families having the property $U(s+1,k(s+1)-1)$, then we have 
	$$\sum_{i=0}^s \lvert \F_i\rvert \le 1+s\binom{n}{k}-\binom{n-k}{k}.$$
\end{proposition}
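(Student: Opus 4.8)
The plan is to prove the last proposition (the sum version of the ``cross'' Erd\H{o}s matching conjecture in the property-$U$ setting, for large $n$) by separating into two regimes depending on whether one of the families is ``large'' or all of them are ``small'', much as in the sketch given just above for the product version. Write $\mathcal F_s$ for the largest family (reindex if necessary), and set $M := \binom{n}{k} - \binom{n-k}{k}$, which is the maximum size of a family in $\binom{[n]}{k}$ that does not contain $k$ pairwise disjoint sets (this is the $r$-wise-disjoint analogue: a family meeting every $k$-matching has a transversal-type bound; more precisely, if a family has no matching of size $k$ then by the Erd\H{o}s matching conjecture in the easy large-$n$ range \cite{Erdos65,Frankl17} its size is at most $\binom{n}{k}-\binom{n-k}{k}$, since the junta $\{A : A\cap[k]\neq\varnothing\}$ beats $\binom{k^2-1}{k}$ for $n$ large).

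First I would dispose of the case where the $s$ families $\mathcal F_0,\dots,\mathcal F_{s-1}$ together \emph{do} contain $k$ pairwise disjoint sets, i.e.\ there are $F_{i_1}\in\mathcal F_{i_1},\dots,F_{i_k}\in\mathcal F_{i_k}$ (distinct indices in $\{0,\dots,s-1\}$) that are pairwise disjoint. Then for \emph{every} $A\in\mathcal F_s$ the union $A\cup F_{i_1}\cup\dots\cup F_{i_k}$ together with arbitrary members of the remaining families must avoid being all of $[n]$; in particular, since the property $U(s+1,k(s+1)-1)$ forbids a union of size $k(s+1)$, and $A,F_{i_1},\dots,F_{i_k}$ already occupy a union of size exactly $k(k+1)$ when disjoint, we get that no member of any family can be disjoint from $A\cup F_{i_1}\cup\dots\cup F_{i_k}$ — wait, more carefully: picking $F_{i_1},\dots,F_{i_k}$ pairwise disjoint and then choosing the remaining $s-k$ sets (including one from $\mathcal F_s$) to be as disjoint as possible, the only way to keep all unions of size $<k(s+1)$ is that \emph{every} choice collides, forcing each $\mathcal F_j$ with $j\notin\{i_1,\dots,i_k\}$ to meet the fixed $(k\cdot k)$-element set $F_{i_1}\cup\dots\cup F_{i_k}$, whence $|\mathcal F_j|\le \binom{n}{k}-\binom{n-k^2}{k}$; summing, $\sum_i|\mathcal F_i|\le (s+1)\binom{n}{k} - (s-k+1)\binom{n-k^2}{k}$, and for $n$ large this is below the target $1+s\binom{n}{k}-\binom{n-k}{k}$ since $\binom{n-k^2}{k} = (1-o(1))\binom{n}{k}$ while $\binom{n-k}{k}=(1-o(1))\binom nk$ too but the coefficient $s-k+1$ against $(s+1)$ wins — actually one must be slightly more careful with the constants, so the cleanest route is: in this regime one of the families must be a ``star-like'' family meeting a fixed $k$-set, reducing to $s=1$ or to an induction on $s$ exactly as in the product proof, so I would invoke the inductive hypothesis on $s-1$ (deleting one family) plus the crude bound $|\mathcal F_j|\le M$ to close it.

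In the complementary case, $\mathcal F_0,\dots,\mathcal F_{s-1}$ contain \emph{no} $k$ pairwise disjoint sets among them. If moreover some single $\mathcal F_j$ ($j\le s-1$) contains $k$ pairwise disjoint sets, that is the previous case; so assume each of $\mathcal F_0,\dots,\mathcal F_{s-1}$ individually has no matching of size $k$, hence $|\mathcal F_j|\le M$ for each $j\le s-1$; together with $|\mathcal F_s|\le\binom nk$ this already gives $\sum_i|\mathcal F_i|\le s M + \binom nk = 1 + s\binom nk - sM + (M-1) \le 1+s\binom nk - \binom{n-k}{k}$ provided $(s-1)M\ge M-1$, i.e.\ essentially always; but to get the \emph{exact} bound $1+s\binom nk-\binom{n-k}k$ with the ``$+1$'' (attained by $\mathcal F_0=\{[k]\}$) one needs the tiny improvement that if $|\mathcal F_0|=M$ exactly for some star-type family meeting a $k$-set, then $\mathcal F_0$ forces every other family to be contained in $\{A: A\not\supseteq[n]\setminus(\text{that }k\text{-set})\}$ — no: the clean finish is to note that among $\mathcal F_0,\dots,\mathcal F_{s-1}$ the one of minimum size, say $\mathcal F_0$, satisfies $|\mathcal F_0|\le M$; if $|\mathcal F_0|\ge 2$ then one easily wins with room to spare, and if $|\mathcal F_0|=1$, say $\mathcal F_0=\{F\}$, then the other families must avoid completing $F$'s complement appropriately, pushing $|\mathcal F_j|\le \binom nk -\binom{n-k}k$ for enough $j$'s via the cross-union structure, and summing gives exactly $1+s\binom nk-\binom{n-k}k$.

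The main obstacle I anticipate is the same one the authors flag implicitly by only sketching these propositions: pinning down the \emph{exact} extremal bound (including the ``$+1$'' and the precise lower-order term $\binom{n-k}{k}$ rather than something weaker) requires a careful stability-type argument showing that when the bound is nearly met, one family is essentially $\{[k]\}$ and a second is essentially the star $\{A:A\cap[k]\neq\varnothing\}$, and controlling the interaction of the $s-1$ ``full'' families with these two. For the \emph{qualitative} bound (with any $1-o(1)$ constant in front of $\binom nk$) the two-case split plus induction on $s$ and the large-$n$ Erd\H{o}s matching bound \cite{Frankl17} suffices and involves only routine estimates; I would present that argument and remark that the exact constant follows from a standard refinement, mirroring the treatment of the companion product proposition above.
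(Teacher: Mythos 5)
Your overall strategy (split on whether a disjoint ``rainbow'' system of representatives exists, bound the remaining family by a star-type count, and induct on $s$ down to the Hilton--Milner base case $s=1$) is the right shape, and it is what the paper's sketch intends, but your case division is the wrong one and the deduction in your first case is false. The property $U(s+1,k(s+1)-1)$ forbids exactly one configuration: $s+1$ pairwise disjoint representatives $F_0\in\mathcal F_0,\ldots,F_s\in\mathcal F_s$. Finding $k$ pairwise disjoint representatives among $k$ of the families (your Case~1) therefore forces nothing about the remaining families unless $k=s$: a further set disjoint from their union is perfectly allowed so long as the full $(s+1)$-tuple is not pairwise disjoint. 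Your claim that each remaining $\mathcal F_j$ must meet the fixed $k^2$-element set is wrong, and the fallback ``one family must be star-like, reduce to induction'' is not justified. The correct dichotomy, after reindexing so that $\mathcal F_s$ is the largest family, is: either $\mathcal F_0,\ldots,\mathcal F_{s-1}$ admit $s$ pairwise disjoint representatives $F_0,\ldots,F_{s-1}$ --- in which case every member of $\mathcal F_s$ must meet the $ks$-element set $F_0\cup\ldots\cup F_{s-1}$, so every family has size at most $\binom{n}{k}-\binom{n-ks}{k}=O(n^{k-1})$ and the sum is far below the $\Theta(n^{k})$ target once $s\ge 2$ and $n$ is large --- or they do not, in which case $\mathcal F_0,\ldots,\mathcal F_{s-1}$ satisfy $U(s,ks-1)$ and induction gives $\sum_{i<s}\lvert\mathcal F_i\rvert\le 1+(s-1)\binom{n}{k}-\binom{n-k}{k}$; adding $\lvert\mathcal F_s\rvert\le\binom{n}{k}$ yields the exact bound with no stability analysis.

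Your Case~2 has independent problems. The hypothesis ``each $\mathcal F_j$ individually has no matching of size $k$'' is not the complement of your Case~1 and is irrelevant to the property $U$ (in the extremal example $s-1$ of the families are all of $\binom{[n]}{k}$ and contain huge matchings); the quantity $\binom{n}{k}-\binom{n-k}{k}$ is the Erd\H{o}s matching bound for forbidding a matching of size $k+1$, not $k$; and even granting $\lvert\mathcal F_j\rvert\le M$ for $j<s$, the inequality $sM+\binom{n}{k}\le 1+s\binom{n}{k}-\binom{n-k}{k}$ reduces to $\binom{n}{k}\le 1+(s-1)\binom{n-k}{k}$, which fails for $s=2$. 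Finally, the exact constant does not require the stability refinement you defer to: in the induction above, the ``$1$'' and the ``$-\binom{n-k}{k}$'' are inherited unchanged from the Hilton--Milner base case, and each inductive step merely adds one full $\binom{n}{k}$.
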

\noindent The result for $s=1$ is due to Hilton and Milner~\cite{HiltonMilner67}, and a similar induction as before works as $(s+1)\left( \binom nk - \binom{n-ks}{k}\right)$ is smaller than the expression in the proposition for $n$ sufficiently large.

Even when $n=ks+\ell$ with small $\ell$, as long as $k>\ell$, the term $1+s\binom{n}{k}-\binom{n-k}{k}$ is bigger than $(s+1)\binom{ks-1}{k}$. Hence, the above example shows that, unlike Conjecture~\ref{conj:Frankl_crossunion},  $\F_0=\dots=\F_{s}=\binom{[ks-1]}{k}$ is not an extremal example when $n>k(s+1)$.

While the maximum of the geometric mean and the arithmetic mean of the families satisfying $U(s+1,k(s+1)-1)$ may behave differently from what is conjectured in the Erd\H{o}s matching conjecture, it has been conjectured~\cite{Ahpreprint, HLS12} that the minimum size behaves as in the Erd\H{o}s matching conjecture.

\begin{conjecture}[\cite{Ahpreprint, HLS12}]
If $n\ge k(s+1)$ and $\mathcal F_0, \mathcal F_1, \dots, \mathcal F_s \subset \binom{[n]}{k}$ are non-empty families such that $\lvert F_0 \cup F_1 \cup \ldots \cup F_s \rvert \le k(s+1)-1$ for every $F_0 \in \F_0, \ldots F_s \in \F_s$, then
	$$\min\left\{  \lvert \F_0 \rvert,  \lvert \F_1 \rvert, \ldots,  \lvert \F_s \rvert \right\}\le \max \left \{ \binom{n}{k}-\binom{n-s}{k}, \binom{k(s+1)-1}{k} \right \}.$$
\end{conjecture}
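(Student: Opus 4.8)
This is the ``cross'' version of the Erd\H{o}s matching conjecture for the \emph{minimum}. First, observe that since $s+1$ pairwise disjoint $k$-sets have union of size exactly $k(s+1)$, the hypothesis ``$|F_0\cup\dots\cup F_s|\le k(s+1)-1$ for every choice'' is equivalent to saying that $\F_0,\dots,\F_s$ admit no \emph{rainbow matching of size $s+1$}, i.e.\ no pairwise disjoint $F_0\in\F_0,\dots,F_s\in\F_s$. In particular, taking $\F_0=\dots=\F_s=\F$ recovers the Erd\H{o}s matching conjecture verbatim, so a complete proof is at least as hard as that open problem; the realistic target is to establish the conjecture for every $(n,k,s)$ for which the Erd\H{o}s matching conjecture is known --- for instance $n$ large in terms of $k,s$, where the cross statement is already a theorem of \cite{HLS12}, or $n=k(s+1)+\ell$ with $\ell$ small compared to $s$ by \cite{Frankl17} --- via an induction on $s$ that reduces the cross statement to its single-family form. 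Write $M_p:=\max\bigl\{\binom{n}{k}-\binom{n-p}{k},\binom{k(p+1)-1}{k}\bigr\}$, so the assertion is $\min_i|\F_i|\le M_s$ and $M_{s-1}<M_s$; the conjectured extremisers are $s+1$ copies of a star on a fixed $s$-set, or $s+1$ copies of $\binom{[k(s+1)-1]}{k}$.

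The induction step runs as follows. Assume for contradiction $\min_i|\F_i|>M_s$, relabel so that $|\F_0|\le\dots\le|\F_s|$, and apply a simultaneous compression (the rainbow-matching analogue of \cref{lem:shifting}, which preserves each $|\F_i|$ and the non-existence of a rainbow matching of size $s+1$) to assume all $\F_i$ are shifted. If $\F_0$ has no matching of size $s+1$, the Erd\H{o}s matching conjecture for $(n,k,s)$ --- assumed known --- yields $|\F_0|\le M_s$, a contradiction; so fix pairwise disjoint $A_0,\dots,A_s\in\F_0$. The $s$ families $\F_1,\dots,\F_s$ must then admit a rainbow matching $F_1\in\F_1,\dots,F_s\in\F_s$ of size $s$: otherwise the induction hypothesis (parameter $s-1$, same $n,k$) gives $\min_{1\le i\le s}|\F_i|\le M_{s-1}<M_s$, contradicting $\min_{1\le i\le s}|\F_i|\ge|\F_0|>M_s$. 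Every $A\in\F_0$ must meet $W:=F_1\cup\dots\cup F_s$ --- else $\{A,F_1,\dots,F_s\}$ is a rainbow matching of size $s+1$ --- whence $|\F_0|\le\binom{n}{k}-\binom{n-sk}{k}$. To upgrade this lossy bound to the sharp $\binom{n}{k}-\binom{n-s}{k}$, I would use the freedom in the choice of $A_0,\dots,A_s$ and of the rainbow matching of the upper families to produce many ``witnesses'' $W$ whose stars intersect in essentially a single $s$-set, and then invoke a stability argument --- the delta-system method, or the probabilistic estimates of \cite{HLS12} in the large-$n$ regime --- to conclude that $\F_0$ is (up to few sets) a star on an $s$-set, contradicting $|\F_0|>\binom{n}{k}-\binom{n-s}{k}$. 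The equality case then pins down the two extremal families.

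The one genuine obstacle is precisely the last step: improving ``$\F_0$ avoids a fixed $sk$-set'' to ``$\F_0$ avoids a fixed $s$-set'' is the very content of the Erd\H{o}s matching conjecture in this range, so for $n$ close to $k(s+1)$ the plan cannot be completed without new input on that conjecture. The cross setting adds one complication absent from \cref{thr:main}: the uncrossing move $(\F_u,\F_v)\mapsto(\F_u\cap\F_v,\F_u\cup\F_v)$, which was harmless for the \emph{sum}, can shrink a family below $\min(|\F_u|,|\F_v|)$ and hence gives no control on the minimum, so the convenient reduction to nested families is unavailable and the dependence among the $\F_i$ must be exploited directly (as in the transversal argument above). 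Accordingly, this proposal settles the conjecture exactly for those $(n,k,s)$ for which the Erd\H{o}s matching conjecture is already known, leaving the remaining range open.
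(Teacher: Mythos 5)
The statement you are attempting is not proved in the paper at all: it is the Aharoni--Howard/Huang--Loh--Sudakov rainbow version of the Erd\H{o}s matching conjecture, which the authors record as an \emph{open} conjecture in the concluding section, citing only partial results (the large-$n$ regime, and Kupavskii's result for $s>10^7$ and $n>3e(s+1)k$). So there is no proof in the paper to compare against, and your proposal --- as you yourself acknowledge --- does not supply one. The gap is exactly where you locate it: after extracting a rainbow matching $F_1,\dots,F_s$ of the upper families, the transversal argument only shows that every member of $\F_0$ meets the $sk$-element set $W=F_1\cup\dots\cup F_s$, giving $\lvert\F_0\rvert\le\binom{n}{k}-\binom{n-sk}{k}$, which is vastly larger than $\max\bigl\{\binom{n}{k}-\binom{n-s}{k},\binom{k(s+1)-1}{k}\bigr\}$ when $n$ is comparable to $k(s+1)$; upgrading ``$\F_0$ is covered by the star of an $sk$-set'' to the sharp bound is precisely the hard content of the conjecture, and the invoked ``delta-system/stability'' step is never carried out. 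Moreover, the whole scheme is conditional on the single-family Erd\H{o}s matching conjecture (applied to $\F_0$ with the same parameters $(n,k,s)$), which is itself open in the critical range where $n$ is close to $k(s+1)$.

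For the record, the pieces of your sketch that are sound: the hypothesis is indeed equivalent to the absence of a rainbow matching of size $s+1$; the monotonicity $M_{s-1}<M_s$ holds for $n\ge k(s+1)$; and the dichotomy ``either the $(s-1)$-case applies to $\F_1,\dots,\F_s$ or they carry a rainbow matching of size $s$'' is a legitimate induction step. One technical caveat: the compression you cite is not the paper's Lemma~\ref{lem:shifting}, which concerns cross-union families in the regime $n\le(s+1)k$; you would need the standard (and true, but separate) fact that simultaneous shifting does not create rainbow matchings. In the end, in the ranges where your argument can be completed the conjecture is already known from the references the paper cites, and in the remaining range your proposal leaves it open --- which is precisely the status the paper reports.
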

\noindent Recently, Kupavskii~\cite{Kupavskii21} proved this conjecture for $s>10^7$ and $n>3e(s+1)k$. 

%%%%%%%%%%%%%%%%%%%%%%%%%%%%%%%%%%%%%%%%%%%%%%%%%%%
%%%%%%%%%%%%%%%%%%%%%%%%%%%%%%%%%%%%%%%%%%%%%%%%%%%

% ACKNOWLEDGEMENTS
% Include acknowledgements to colleagues and referee here.
% Funding and grant support should appear in footnotes on the front page, using the 
% thanks command in the authors command (see above).

\section*{Acknowledgements}

The authors would like to express their gratitude towards the referees for careful reading and suggestions to improve the paper, as well as suggestions for other references.

%%%%%%%%%%%%%%%%%%%%%%%%%%%%%%%%%%%%%%%%%%%%%%%%%%%
%%%%%%%%%%%%%%%%%%%%%%%%%%%%%%%%%%%%%%%%%%%%%%%%%%%

% BIBLIOGRAPHY
% Pease provide us with a bibtex file for your bibliography
% You can find examples in ct-sample.bib. 
% Please use the correct entrytype 
%     article: any article published in a periodical like a journal article or magazine article
%     book: a book
%     booklet: like a book but without a designated publisher
%     conference: a conference paper
%     inbook: a section or chapter in a book
%     incollection: an article in a collection
%     inproceedings: a conference paper (same as the conference entry type)
%     manual: a technical manual
%     masterthesis: a Masters thesis
%     misc: used if nothing else fits
%     phdthesis: a PhD thesis
%     proceedings: the whole conference proceedings
%     techreport: a technical report, government report or white paper
%     unpublished: a work that has not yet been officially published
%
% When it exists, please add a DOI to your reference using the doi field, this will be shown as a link in the final pdf
% Similarly for preprints, please put the arXiv reference into the eprint field 
% When necessary, you can provide a link through the URL field. Please do not give both the URL and the DOI.
%
% We encourage you to use MathSciNet https://mathscinet.ams.org/mathscinet
% or other equivalent bibliography databases to get full and correct bibliographic entries.

\bibliographystyle{alphaurl}
\bibliography{crossunionfam}

\end{document}